\documentclass[11pt]{article}

\usepackage{amsmath,amscd,amsthm,amsfonts,amssymb}
\usepackage{xcolor}
\usepackage{tikz-cd} 
\usepackage{tikz}
\usepackage{chngcntr}
\counterwithin{figure}{subsection}

\usepackage[modulo]{lineno}
\usepackage{hyperref}
\usepackage{tikz}
\usepackage[latin1]{inputenc}
\numberwithin{figure}{section}

\usetikzlibrary{positioning}
	\newtheorem{thm}{Theorem}
	\newtheorem{cor}{Corollary}
	\newtheorem{lem}{Lemma}
	\newtheorem{rem}{Remark}

	\newtheorem{prop}{Proposition}

	\newtheorem*{ques}{Question}

	\newcounter{constant}

    \newcommand{\gaussp}{\mathfrak{g}}

        \newcommand{\gauss}{\mathfrak{g}}
	 
	 \newcommand{\st}{\mathbb{S}^3}
	    \newcommand{\cdeux}{\mathbb{C}^2}
	   \newcommand{\sd}{\mathbb{S}^2}
	  \newcommand{\sdeux}{\mathbb{S}^2}
	   \newcommand{\rtrois}{\mathbb{R}^3}
	 \newcommand{\rquatre}{\mathbb{R}^4}
	 \newcommand{\quat}{\mathbb{H}}
	\newcommand{\quatc}{\mathbb{H}_{\mathbb{C}}}
	  \newcommand{\quatU}{\mathbb{H}_U}
	    \newcommand{\quatP}{\mathbb{H}_P}
	   \newcommand{\quatUP}{\mathbb{H}_{UP}}

	      \providecommand{\keywords}[1]
{
  \small	
  \textrm{\textit{Keywords---}} #1
}


	\begin{document}
	\title{\bf \fontsize{12}{12} \selectfont   ON THE SIZE OF MINIMAL SURFACES IN   $\mathbb{R}^4$  }
	\author{ \fontsize{10}{10} \selectfont    ARI AIOLFI,  MARC SORET AND MARINA VILLE}
	\date{ June 10 2021}

	\maketitle
	\begin{abstract} 
		   The Gauss map $g$  of    a  surface  $\Sigma$ in $\mathbb{R}^4$ takes its values in the Grassmannian  of oriented 2-planes of $\mathbb{R}^4$: $G^+(2,4) $.   
		   We give geometric criteria of  stability for  minimal surfaces in $\mathbb{R}^4$ in terms of $g$. We show in particular that if  the spherical area  of the Gauss map $ |g(\Sigma) |    $  of a minimal surface is smaller  than  $ 2\pi$ 
	  then the surface  is stable  by deformations which   fix the  boundary of the surface.  This answers the question of \cite{BDC3} in $\mathbb{R}^4$.
 	\end{abstract}
	\keywords{  Gauss map, Grassmannian, minimal surface, stability}
\section{\normalfont Introduction}
\let\thefootnote\relax\footnotetext{ \hskip -.15 in  2020 { \it Mathematics Subject Classification} . Primary  53A10.\\
 Supported  in part by   CNPq/Brazil \&  FSMP/France}
A geometric criterion for the stability of a  minimal surface $\Sigma$ in the Euclidean space $\mathbb{R}^3$ is the  spherical area of its Gauss map image (without multiplicity)
$|g(\Sigma)|$. Thus, 
if the area is smaller than $2\pi$ then the surface is stable   (see \cite{BDC} \& \cite{BDC2}  ). \\
A similar  stability  criterion  was later generalized for   simply-connected minimal surfaces in $\mathbb{R}^n$  where the Gauss map of $\Sigma$ takes its values in the 
Grassmannian $G^+(2,n)$ (see also   \cite{HO}):
 \begin{thm}[ \cite{BDC3} ]\label{thm3}
  Let  $\Sigma$ be a   minimal surface in  $\mathbb{R}^n$.  If $\Sigma$ is  simply-connected and  $|g(\Sigma)| \leq \frac{4\pi}{3}$ then $\Sigma$ is stable.
 \end{thm}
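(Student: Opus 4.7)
My plan is to translate the stability inequality into a Dirichlet eigenvalue estimate on the image $g(\Sigma) \subset G^+(2,n)$, exploiting the simple connectedness of $\Sigma$ to pass cleanly between $\Sigma$ and its Gauss image.

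First I would write down the second variation of area for a compactly supported normal variation $V$ vanishing on $\partial\Sigma$. Since $\Sigma$ is simply-connected, the normal bundle $N\Sigma$ admits a global orthonormal frame, and $V$ decomposes into $n-2$ scalar components along this frame. The Jacobi quadratic form then takes the shape $\int_\Sigma (|\nabla^\perp V|^2 - \langle \mathcal{S} V, V\rangle)\,dA$, where the symmetric endomorphism $\mathcal{S}$ of $N\Sigma$ is built pointwise from the second fundamental form $A$; stability is equivalent to the nonnegativity of this form on all such $V$.

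Second, I would identify $\mathcal{S}$ with geometric data of the Gauss map. For a minimal surface $g$ is weakly conformal with $|dg|^2 = |A|^2$, so that the pulled-back area form satisfies $g^{\ast}\omega_{G^+(2,n)} = \tfrac{1}{2}|A|^2\,dA_\Sigma$; this is, up to a universal constant, the $L^1$ norm of the potential in the Jacobi operator. Using simple connectedness to lift $g$ coherently, one can compare this ``with-multiplicity'' integral with the geometric area $|g(\Sigma)|$. Stability of $\Sigma$ is then implied by a purely spectral statement on the target: every domain $D \subset G^+(2,n)$ with $|D| \leq 4\pi/3$ has first Dirichlet eigenvalue $\lambda_1(D) \geq 1$, in which case an appropriately pulled-back first eigenfunction witnesses stability on $\Sigma$.

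Finally, the eigenvalue estimate would be proved by a Faber--Krahn--type symmetrisation argument on $G^+(2,n)$, whose sectional curvature lies in $[0,2]$; the extremal case is a geodesic disk of area $4\pi/3$ on a totally geodesic $S^2$ factor, on which $\lambda_1=1$ is attained by an explicit spherical harmonic computation. The main obstacle I expect is the second step: controlling the passage from the pointwise integral of $|A|^2$ to the geometric area $|g(\Sigma)|$ counted without multiplicity, handling the branch points of $g$ where $dg$ drops rank, and accommodating the fact that for $n\geq 4$ the normal bundle has rank $n-2$ so that $\mathcal{S}$ is genuinely matrix-valued rather than scalar.
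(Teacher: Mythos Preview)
Your outline is essentially the original Barbosa--Do Carmo strategy from \cite{BDC3}, and the theorem is indeed quoted from that reference; the present paper does not reprove it in full generality. What the paper does instead is give an \emph{alternative} argument valid only in $\mathbb{R}^4$, and that argument is structurally quite different from yours.

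Rather than working directly on $G^+(2,n)$, the paper exploits the isometric splitting $G^+(2,4)\cong\mathbb{S}^2\times\mathbb{S}^2$ and projects the Gauss map to each factor, obtaining left and right Gauss maps $\mathfrak{g}_L,\mathfrak{g}_R$. The key identity (Proposition~\ref{propSurG}, item~5) separates the tangent and normal curvatures as $K^T=-\tfrac{1}{2E}(|\mathfrak{g}_{L,x}|^2+|\mathfrak{g}_{R,x}|^2)$ and $K^N=-\tfrac{1}{2E}(|\mathfrak{g}_{L,x}|^2-|\mathfrak{g}_{R,x}|^2)$. This lets one run the Barbosa--Do Carmo transplantation argument on each $\mathbb{S}^2$ separately and then combine the two resulting inequalities linearly; the upshot is Theorem~\ref{thm1} (stability if the harmonic mean of the two spherical first eigenvalues exceeds $2$). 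Only the classical isoperimetric inequality on $\mathbb{S}^2$ is used, never an isoperimetric or Faber--Krahn statement on the Grassmannian itself. One then deduces Corollary~\ref{cor1}, and since $|\mathfrak{g}_L(\Sigma)|+|\mathfrak{g}_R(\Sigma)|\le|g(\Sigma)|$ (Lemma~\ref{kaehler}, Corollary~\ref{aires}), Theorem~\ref{thm3} in $\mathbb{R}^4$ follows.

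What each approach buys: your route (and that of \cite{BDC3}) works in every $\mathbb{R}^n$ but needs simple connectedness precisely to control the passage to the Gauss image (your acknowledged ``second step''), and it requires a nontrivial symmetrisation/isoperimetric statement on $G^+(2,n)$. The paper's route is restricted to $n=4$, but it eliminates the simple-connectedness hypothesis entirely and yields the strictly stronger conclusion that $|\mathfrak{g}_L(\Sigma)|+|\mathfrak{g}_R(\Sigma)|\le\tfrac{4\pi}{3}$ already suffices. Your Faber--Krahn step on $G^+(2,n)$, with the claimed extremal being a geodesic disk on a totally geodesic $\mathbb{S}^2$, is the point where your sketch is thinnest; the paper sidesteps this completely by never leaving the round $2$-sphere.
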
 
 In \cite{BDC3}   one asks  whether the  weaker condition $|g(\Sigma)| < 2\pi$ ensures stability.  We prove in particular that  this is true for minimal surfaces  in $\mathbb{R}^4$.

More precisely,   we suppose   that $\Sigma$ be  a minimal surface of $\mathbb{R}^4$.  The Gauss map of  $\Sigma$ 
 takes its  values in the Grassmannian of oriented 2-planes in $\mathbb{R}^4$ $g: \Sigma \longrightarrow G^+(2,4)$.  Note that $G^+(2,4)$  is isometric 
to the product of spheres $\mathbb{S}^2\left(\frac{1}{\sqrt{2}}  \right) \times  \mathbb{S}^2\left(\frac{1}{\sqrt{2}}  \right) $ and that  $g$   is   the product of two  $\mathbb{S}^2$-valued  maps  
 : the left Gauss map and right Gauss map  which is denoted by $g = ( \gaussp_L, \gaussp_R)$.  As $\Sigma$ is minimal, it is a critical point for the area functional for deformations with fixed boundary. If it is a local minimum then it is stable.\\
We begin by   studying   the relationship between the stability of $\Sigma$ and the area of each projection of the Gauss map image of $\Sigma$ . We first show  :
\begin{thm} \label{thm1}
Let $\Sigma$ be a   minimal surface in  $\rquatre$ . Let $\lambda$ (resp. $\mu$ ) the first Dirichlet eigenvalues of the   Gauss images   $\gaussp_L(\Sigma)$  
( resp. $\gaussp_R(\Sigma)$ ). If the harmonic mean of $\lambda$ and $\mu$ is larger than 2 then $\Sigma$ is stable.
\end{thm}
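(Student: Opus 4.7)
The plan is to test the second variation of area against normal variations built from the first Dirichlet eigenfunctions of the two Gauss images. Write a compactly supported normal variation as $V = \phi e_3 + \psi e_4$ in an orthonormal frame $(e_3,e_4)$ of $N\Sigma$; stability will follow as soon as
\begin{equation*}
Q(V) \;=\; \int_\Sigma \bigl(|\nabla^\perp V|^2 - |A_V|^2\bigr)\,dA \;\geq\; 0
\end{equation*}
for every such $V$.

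The first step is a local computation exploiting the identification $G^+(2,4) \simeq \mathbb{S}^2(1/\sqrt{2})\times\mathbb{S}^2(1/\sqrt{2})$ arising from the self-dual/anti-self-dual splitting of $\Lambda^2\mathbb{R}^4$. Writing the shape operators $h^3, h^4$ in an adapted frame and using the minimality conditions $h^\alpha_{11}+h^\alpha_{22}=0$, I would verify the fundamental identity
\begin{equation*}
|d\gaussp_L|^2 + |d\gaussp_R|^2 \;=\; |A|^2,
\end{equation*}
together with the matching decomposition of the pointwise quadratic form $|A_V|^2$ into a ``left'' piece paired with $|d\gaussp_L|^2$ and a ``right'' piece paired with $|d\gaussp_R|^2$; these two pieces correspond to the $(1,0)$ and $(0,1)$ parts of the second fundamental form with respect to the natural complex structures on $T\Sigma$ and $N\Sigma$. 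This factorization is the geometric input that allows the two Gauss images to enter independently in the final estimate.

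The second step is the test-function choice. Let $f \in C_0^\infty(\gaussp_L(\Sigma))$ and $h \in C_0^\infty(\gaussp_R(\Sigma))$ be first Dirichlet eigenfunctions, with eigenvalues $\lambda$ and $\mu$, and set $\phi = f\circ\gaussp_L$, $\psi = h\circ\gaussp_R$. Since $\gaussp_L$ and $\gaussp_R$ are (weakly) conformal harmonic maps for a minimal $\Sigma$, the change-of-variables formula gives
\begin{equation*}
\int_\Sigma |\nabla\phi|^2\,dA \;=\; \int_{\gaussp_L(\Sigma)}|\nabla f|^2\,d\sigma \;\geq\; \lambda\int_{\gaussp_L(\Sigma)} f^2\,d\sigma \;=\; \frac{\lambda}{2}\int_\Sigma \phi^2\,|d\gaussp_L|^2\,dA,
\end{equation*}
and symmetrically for $\psi$ with $\mu$. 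Plugging into $Q(V)$ and using the decomposition of $|A_V|^2$ from the first step reduces positivity to an elementary optimization whose sharp threshold is $\tfrac{1}{\lambda}+\tfrac{1}{\mu}<1$, i.e.\ the harmonic mean of $\lambda$ and $\mu$ is larger than $2$.

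The main obstacle will be twofold. First, $|\nabla^\perp V|^2$ is not literally $|\nabla\phi|^2+|\nabla\psi|^2$: the non-triviality of the normal connection produces correction terms depending on the normal curvature, and these must be shown to be either favorable or absorbable. Second, $|A_V|^2$ contains a cross-term $2\phi\psi\,\langle A^3,A^4\rangle$ that a priori does not respect the $L/R$ splitting. Handling both is the technical heart of the argument, and it is precisely where the $\mathbb{S}^2\times\mathbb{S}^2$ factorization of $G^+(2,4)$, together with a carefully weighted combination of the two test variations, is needed in order to recover the sharp harmonic-mean threshold rather than a cruder bound like $\min(\lambda,\mu)>2$.
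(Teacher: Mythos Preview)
Your proposal has the direction of the argument reversed. Stability means $Q(V)\ge 0$ for \emph{every} compactly supported normal section $V$; exhibiting one particular $V$ built from the Dirichlet eigenfunctions $f\circ\gaussp_L$ and $h\circ\gaussp_R$ and checking $Q(V)\ge 0$ for it proves nothing about stability (that is the kind of test one would run to detect \emph{in}stability). What is needed is the opposite flow of information: the Rayleigh characterization of $\lambda$ and $\mu$ on the spherical images must be converted into an inequality on $\Sigma$ valid for \emph{all} compactly supported scalar functions, and this universal inequality is then fed into the second-variation form for an arbitrary $V$. Your change-of-variables line is also not valid as written, since $\gaussp_L$ need not be injective; and an arbitrary $\phi\in C^\infty_0(\Sigma)$ is not of the form $f\circ\gaussp_L$.

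The paper's argument supplies exactly the missing mechanism. By conformality of $\gaussp_L$ and the Barbosa--do Carmo preimage-summing trick, one gets for every $f\in C^\infty_0(\Sigma)$
\[
\int_\Sigma |\nabla f|_\Sigma^2\,dA \;+\;\lambda\int_\Sigma (K^T+K^N)\,f^2\,dA \;\ge\; 0,
\]
and the analogous inequality with $\mu$ and $K^T-K^N$ coming from $\gaussp_R$. Taking the convex combination with weights $\tfrac{\mu}{\lambda+\mu}$ and $\tfrac{\lambda}{\lambda+\mu}$ eliminates $K^N$ and yields $\int |\nabla f|^2 + \tfrac{2\lambda\mu}{\lambda+\mu}\int K^T f^2\ge 0$ for all $f$. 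For an arbitrary normal section one writes $\xi=f\zeta$ with $|\zeta|=1$, uses the pointwise bound $\sum_{i,j}\langle B_{ij},\xi\rangle^2\le -2K^T|\xi|^2$ and the identity $|\nabla^N(f\zeta)|^2=|\nabla f|^2+f^2|\nabla^N\zeta|^2$, and concludes that $Q(\xi)\ge \int f^2|\nabla^N\zeta|^2 + \bigl(2-\tfrac{2\lambda\mu}{\lambda+\mu}\bigr)\int K^T f^2$, which is nonnegative when $\tfrac{\lambda\mu}{\lambda+\mu}>1$. In particular there is no need for an $L/R$ splitting of $|A_V|^2$, no cross terms to control, and the normal-connection contribution $f^2|\nabla^N\zeta|^2$ is simply dropped as nonnegative; the ``main obstacle'' you anticipated does not arise once the logic is run in the correct direction.
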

We deduce from this theorem the Proposition \ref{hypequi}  (p. \pageref{page:hypequi1})  where the  stability condition  is expressed in terms of the area of the left and right Gauss map images.  This is obtained by replacing the 
  Dirichlet eigenvalues  by lowerbounds   in terms of the area {\it via }   the standard isoperimetric inequality for spherical domains as in \cite{BDC2}.
    One    derives  the  next corollary which directly  implies Theorem \ref{thm3}   of  \cite{BDC3} in $\mathbb{R}^4$.
 \begin{cor}\label{cor1}
Let $\Sigma$ be a   minimal surface in  $\rquatre$ . Suppose  that the  spherical areas  of $\Sigma$ satisfy 
$|\gaussp_L(\Sigma) |+  |\gaussp_R(\Sigma) | \leq \frac{4\pi}{3} $   then $\Sigma$ is stable.
\end{cor}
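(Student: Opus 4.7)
Write $a=|\gaussp_L(\Sigma)|$ and $b=|\gaussp_R(\Sigma)|$, and let $\lambda,\mu$ be the first Dirichlet eigenvalues of $\gaussp_L(\Sigma),\gaussp_R(\Sigma)$. The plan is to invoke Theorem \ref{thm1} and then replace the eigenvalues by area-dependent lower bounds obtained by Schwarz symmetrization on $\mathbb{S}^2(1/\sqrt{2})$, following exactly the pattern used in \cite{BDC2} for $\mathbb{R}^3$.

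By Theorem \ref{thm1}, stability follows as soon as
\[
\frac{2\lambda\mu}{\lambda+\mu}>2, \quad\text{i.e.,}\quad \frac{1}{\lambda}+\frac{1}{\mu}<1.
\]
I would first bound $\lambda$ and $\mu$ from below purely in terms of $a$ and $b$. On the sphere $\mathbb{S}^2(1/\sqrt{2})$, of Gaussian curvature $2$ and total area $2\pi$, the spherical Faber--Krahn inequality gives $\lambda_1(D)\geq \Lambda(|D|)$ for any proper subdomain $D$, where $\Lambda(A)$ denotes the first Dirichlet eigenvalue of a geodesic cap of area $A$; the function $\Lambda$ is explicit through Legendre functions, strictly decreasing, and satisfies $\Lambda(\pi)=4$ at the hemisphere. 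Since $a+b\leq 4\pi/3<2\pi$, both caps are strictly contained in $\mathbb{S}^2(1/\sqrt{2})$, so this comparison is available for both factors. It therefore suffices to establish the purely analytic inequality
\[
\frac{1}{\Lambda(a)}+\frac{1}{\Lambda(b)}<1, \qquad a+b\leq \frac{4\pi}{3},\ a,b\geq 0,
\]
which is precisely the area-theoretic reformulation promised by Proposition \ref{hypequi}; Corollary \ref{cor1} is then its specialization.

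The remaining step is a one-dimensional constrained optimization. Since $\Lambda$ is strictly decreasing, the worst case lies on the boundary $a+b=4\pi/3$; by convexity of $1/\Lambda$ on the relevant range (standard from the Legendre-function representation), it is further concentrated at the symmetric point $a=b=2\pi/3$. The cap of area $2\pi/3$ on $\mathbb{S}^2(1/\sqrt{2})$ has $\cos\theta=1/3$, hence geodesic angle $\theta=\arccos(1/3)<\pi/2$, so it sits strictly inside a hemisphere and satisfies $\Lambda(2\pi/3)>\Lambda(\pi)=4>2$; consequently $\frac{1}{\Lambda(2\pi/3)}+\frac{1}{\Lambda(2\pi/3)}<\frac12<1$, and monotonicity extends the bound to all admissible $(a,b)$.

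The main obstacle is this last analytic step: producing a lower bound for $\Lambda$ on $[0,4\pi/3]$ that is sharp enough to confirm $\tfrac{1}{\Lambda(a)}+\tfrac{1}{\Lambda(b)}<1$ along the worst axis, and justifying that the maximum of $\tfrac{1}{\Lambda(a)}+\tfrac{1}{\Lambda(b)}$ under $a+b\leq 4\pi/3$ is attained at $a=b$. Once $\Lambda$ is written out explicitly through the first zero of $P_\nu(\cos\theta)$, both ingredients reduce to monotonicity-convexity checks on this special function; these are the same technical inputs used in \cite{BDC2} in the $\mathbb{R}^3$ setting, simply applied here to the two $\mathbb{S}^2(1/\sqrt{2})$-factors of $G^+(2,4)$.
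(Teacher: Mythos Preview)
Your overall strategy---invoke Theorem~\ref{thm1} and then replace $\lambda,\mu$ by area-dependent lower bounds---is exactly the paper's. The difference is in which lower bound you use, and that is where you both over-complicate the argument and introduce a gap.

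The paper does \emph{not} use the sharp Faber--Krahn cap eigenvalue $\Lambda(A)$. It uses the elementary bound from \cite{BDC2},
\[
\lambda_1(D)\ \ge\ \frac{2\bigl(4\pi-|D|\bigr)}{|D|}\,,
\]
on the unit sphere (equation~\eqref{iso}). In proportionate areas $a,b$ this gives
\[
\frac{1}{\lambda}+\frac{1}{\mu}\ \le\ \frac{a}{2(1-a)}+\frac{b}{2(1-b)},
\]
an explicit convex function of $(a,b)$. Its sublevel set $\{\le 1\}$ is the convex region of Proposition~\ref{hypequi}, meeting the axes at $(2/3,0)$ and $(0,2/3)$; Corollary~\ref{cor1} is then simply the remark that the chord $a+b=2/3$ joining these two boundary points lies inside this convex set. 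No Legendre functions, no cap eigenvalues, no special-function monotonicity checks are needed.

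Your optimization step, by contrast, contains a genuine error. You claim that convexity of $1/\Lambda$ forces the maximum of $\tfrac{1}{\Lambda(a)}+\tfrac{1}{\Lambda(b)}$ under $a+b=c$ to be attained at the symmetric point $a=b=c/2$. The opposite holds: for a convex $f$, the map $a\mapsto f(a)+f(c-a)$ is convex, hence \emph{minimized} at $a=c/2$ and maximized at the endpoints $(0,c)$ and $(c,0)$. Thus your check at $a=b=2\pi/3$ handles the easiest case, not the worst one, and the ``monotonicity extends the bound'' sentence is unjustified. (The endpoint case $\Lambda(c)>1$ can be recovered, but the crude bound \eqref{iso} already delivers it---which is precisely why the paper uses that bound in the first place.)
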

Notice first that there is no condition on the topology of $\Sigma$ because   only    the classical isoperimetric inequality on 2-spheres 
 is used ( and not an isoperimetric inequality for surfaces  in  $G(2,4)$ as in\cite{BDC3}). Secondly,  one always has  $|\gaussp_L(\Sigma) |+  |\gaussp_R(\Sigma) |   \leq | g(\Sigma)|$ 
  ( cf  Lemma \ref{kaehler} and Corollary \ref{aires}), hence   the stability 
 condition of Corollary \ref{cor1} on
the sum of the projected area   is a weaker hypothesis than the spherical area upperbound  of Theorem  \ref{thm3} of \cite{BDC3} \\
  The  stability domain  obtained in Corollary \ref{cor1}  (resp.  in Proposition \ref{hypequi} ) is represented by domain 1 (resp.  Domain 3)   of  figure  \ref{fig} 
  \begin{figure}[h!]\label{fig}
\begin{center}
\includegraphics[scale=0.4]{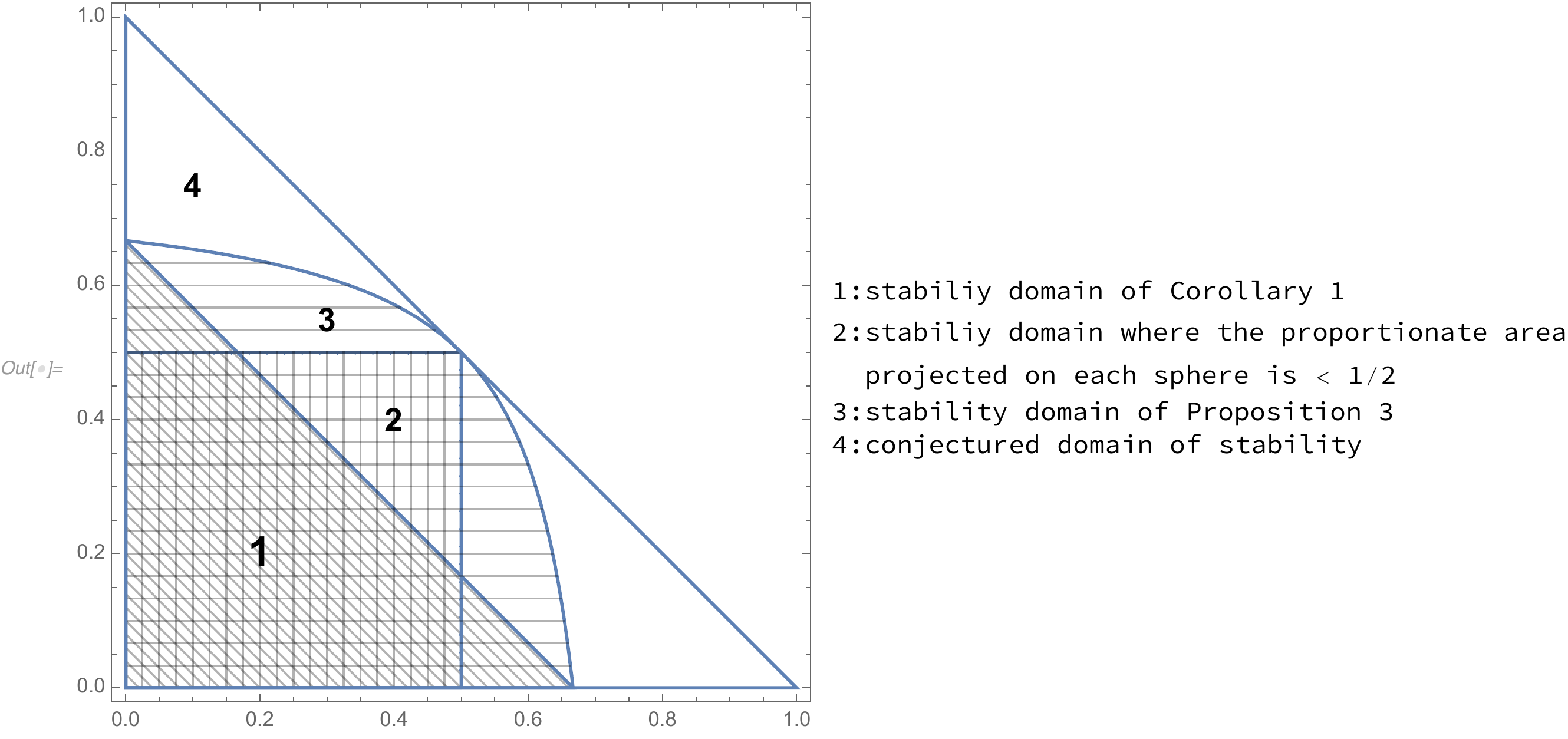} 
\caption{ Stability wrt to the projected spherical area :$ \Sigma \mapsto \left(\frac{ |\gaussp_L\left(\Sigma\right)|}{2\pi},\frac{ |\gaussp_R\left(\Sigma\right)|}{2\pi} \right)$  }
\end{center}
\end{figure} 
 
\begin{rem}\label{remarkone}
The spherical area  $|g(\Sigma)|$ or   $|\gaussp_L(\Sigma) |+  |\gaussp_R(\Sigma) | $ are  counted without multiplicity. 
In particular    $|\gaussp_L(\Sigma) |+  |\gaussp_R(\Sigma) | $   is  bounded from above by $4\pi$ contrary to $|g(\Sigma)|$ which has no {\it a priori}  upperbound ( see  Proposition  \ref{nobound}).
Notice also that  the  Corollary  1 can be restated  in terms of proportionate area by saying that if the proportionate area sum  is less than one third - then the minimal surface is stable  \end{rem}
Complex curves  $ \Sigma \subset \mathbb{R}^4$ satisfy  $|\gaussp_L(\Sigma)| =0$,   $|\gaussp_R(\Sigma)| \leq 2\pi$,   $|g(\Sigma)| \leq 2\pi$,   and are examples of stable 
minimal surfaces.    In  light of these examples, one  may wonder   -as   in  \cite{BDC3} -  if  the upperbound $\frac{4\pi}{3}$ of  Theorem \ref{thm3} can be replaced by  $2\pi$. Indeed, one shows  in the second part  of the paper that 
\begin{thm}\label{thm4}
 Let    $\Sigma$ be a   minimal surface in  $\mathbb{R}^4$.  If    $ |g(\Sigma)|   < 2\pi $ then   $\Sigma$  is stable.
\end{thm}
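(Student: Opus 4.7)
The plan is to deduce Theorem \ref{thm4} from Theorem \ref{thm1} by invoking the projection estimate $|\gaussp_L(\Sigma)| + |\gaussp_R(\Sigma)| \leq |g(\Sigma)|$ supplied by Corollary \ref{aires}. Setting $a := |\gaussp_L(\Sigma)|$ and $b := |\gaussp_R(\Sigma)|$, the hypothesis $|g(\Sigma)| < 2\pi$ places the pair $(a, b)$ in the open triangle $T = \{a + b < 2\pi\} \subset [0, 2\pi]^2$, where each factor sphere $\mathbb{S}^2(1/\sqrt{2})$ has total area $2\pi$. This region is strictly larger than the triangle $\{a + b \leq 4\pi/3\}$ handled by Corollary \ref{cor1}, so the argument must extend along each axis essentially up to the total area $2\pi$ of the target sphere.

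The next step is the spherical Faber-Krahn (Sperner) inequality on each factor $\mathbb{S}^2(1/\sqrt{2})$: the first Dirichlet eigenvalues $\lambda,\mu$ of $\gaussp_L(\Sigma), \gaussp_R(\Sigma)$ satisfy $\lambda \geq \phi(a)$ and $\mu \geq \phi(b)$, where $\phi(A)$ is the first Dirichlet eigenvalue of a geodesic cap of area $A$ on $\mathbb{S}^2(1/\sqrt{2})$. The function $\phi$ is strictly decreasing on $[0, 2\pi)$ with $\phi(\pi) = 4$ (hemisphere) and $\phi(A) \to 0$ as $A \to 2\pi$. By Theorem \ref{thm1}, it suffices to verify the harmonic-mean inequality $\frac{1}{\phi(a)} + \frac{1}{\phi(b)} < 1$ on $T$. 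A direct computation using the Legendre expansion of the cap eigenvalue confirms this in the bulk of $T$ away from the two vertices $(2\pi, 0)$ and $(0, 2\pi)$, substantially extending the domain of stability beyond that of Corollary \ref{cor1}.

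The main obstacle is the corner regime near those vertices: as $a \to 2\pi$, we have $\phi(a) \to 0$, and the harmonic-mean bound breaks down even though $a+b<2\pi$ still holds. I would handle this case by a separate rigidity argument rooted in the complex geometry of $\mathbb{R}^4$. A point $p_0 \in \mathbb{S}^2(1/\sqrt{2})$ in the right factor of $G^+(2,4)$ picks out an orthogonal complex structure $J_{p_0}$ on $\mathbb{R}^4$, and forcing $\gaussp_R(\Sigma)$ into a small geodesic ball around $p_0$ means that at every point of $\Sigma$ the tangent plane is close (in the Grassmannian) to a $J_{p_0}$-complex line. Since $J_{p_0}$-holomorphic surfaces are calibrated by the K\"ahler form, they are area-minimizing and in particular stable; a quantitative openness of the second-variation form under $C^1$-small perturbations of the Gauss map would then yield stability in the corner. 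Gluing the bulk Faber--Krahn estimate together with this corner estimate gives stability throughout $T$, completing the proof.

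The hardest step, I expect, will be making the openness estimate in the corner uniform and arranging it to overlap continuously with the Faber-Krahn region, so that no intermediate $(a,b) \in T$ falls outside both regimes. A clean alternative, which would bypass the matching issue entirely, would be to pull back the Jacobi operator on $\Sigma$ to its Gauss image $g(\Sigma) \subset G^+(2,4)$ and establish a single isoperimetric/eigenvalue inequality on $\mathbb{S}^2(1/\sqrt{2}) \times \mathbb{S}^2(1/\sqrt{2})$ under the size constraint $|g(\Sigma)| < 2\pi$; I would try this route as well and take whichever yields the cleaner bookkeeping.
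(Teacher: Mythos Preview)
Your diagnosis of the obstacle is correct: the harmonic-mean criterion of Theorem~\ref{thm1}, even with the sharp spherical Faber--Krahn bound in place of \eqref{iso}, cannot reach the corners of the triangle $\{a+b<2\pi\}$, because $\phi(a)\to 0$ as $a\to 2\pi$. But your proposed corner fix is not a proof, and there is a real obstruction to making it one. Confining $\gaussp_R(\Sigma)$ to a small cap constrains only the \emph{direction} of the tangent planes, not the size of the second fundamental form: by \eqref{coeff} one has $E|\gaussp_{R,x}|^2=\tfrac12|B|^2-\delta$, so $|\gaussp_{R,x}|$ small means only that $\tfrac12|B|^2$ and $\delta$ nearly cancel, while $|B|^2$ itself---and hence the potential in the Jacobi operator \eqref{stableNormalante}---can be arbitrarily large. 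Thus ``$\gaussp_R$ nearly constant'' does \emph{not} place $\Sigma$ $C^1$-close to any fixed holomorphic curve, and there is no openness-of-stability theorem that applies uniformly over this class. Your alternative, a single isoperimetric-eigenvalue inequality on $g(\Sigma)\subset G^+(2,4)$, is essentially the route of \cite{BDC3}, which yields only the $4\pi/3$ bound of Theorem~\ref{thm3}; pushing it to $2\pi$ would itself be the theorem.

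The paper sidesteps the corner entirely by a balancing trick you have not considered. It builds a one-parameter family of \emph{associate} minimal surfaces $\Sigma_t$ by acting on the Weierstrass data with a path $\gamma_t$ in $U(4)\cap A\cdot SO(4,\mathbb{C})\cdot A^{-1}$ from the identity to an element that swaps the two $\mathbb{P}^1$ factors of the quadric $Q_2$. Because $\gamma_t\in U(4)$, every $\Sigma_t$ is isometric to $\Sigma$, has the same $K^T$, and has $|g(\Sigma_t)|=|g(\Sigma)|$ (Lemma~\ref{propAsso}); in particular the \emph{scalar} operator $\Delta_\Sigma-2K^T$, whose positivity suffices for stability via \eqref{stable10}--\eqref{energie}, is literally unchanged along the family. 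Since the endpoint $\Sigma_1$ has its left and right Gauss images interchanged, continuity produces some $t_0$ with $|\gaussp_L(\Sigma_{t_0})|=|\gaussp_R(\Sigma_{t_0})|$; Corollary~\ref{aires} then forces each to be below $\pi$, so Proposition~\ref{hypequi} applies at the diagonal point $(1/2,1/2)$ and gives positivity of $\Delta-2K^T$ for $\Sigma_{t_0}$, hence for $\Sigma$. The moral: rather than extend the eigenvalue estimate into the corners, the paper moves the surface---without touching its stability operator---onto the diagonal, where the estimate you already have is enough.
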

 The  proof relies on Theorem  \ref{thm1}  and on next proposition.
 \begin{prop}
 
For any  minimal surface $\Sigma$,  there is an associate minimal surface  $\Sigma^*$  isometric  to $\Sigma$,    such that: 
\begin{enumerate}
\item the left and right  Gauss area  of   $\Sigma^*$ are equal. 
\item  if   $\Sigma^*$ is stable than so is $\Sigma$.
\end{enumerate}
\end{prop}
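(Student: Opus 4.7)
The plan is to construct $\Sigma^*$ by deforming the Weierstrass-type data of $\Sigma$ within its class of isometric minimal immersions into $\rquatre$. A simply connected minimal surface admits a local representation $X = \mathrm{Re}\int \Phi\,dz$ with $\Phi=(\phi_1,\phi_2,\phi_3,\phi_4)$ holomorphic and isotropic ($\sum_i\phi_i^2=0$); this factors through two meromorphic Gauss components $g_1=\gaussp_L$ and $g_2=\gaussp_R$, valued in the two $\sd(1/\sqrt{2})$-factors of $\grass$, together with a holomorphic $1$-form $\omega$, giving the induced metric
\[
ds^2 = \tfrac{1}{2}|\omega|^2(1+|g_1|^2)(1+|g_2|^2)|dz|^2.
\]

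The central step is to exhibit a continuous one-parameter family $\Sigma_t$, $t\in[0,1]$, of isometric minimal immersions of the same underlying Riemann surface with $\Sigma_0=\Sigma$ and $\Sigma_1=\bar\Sigma$, where $\bar\Sigma$ is the image of $\Sigma$ under an orientation-reversing isometry $\tau$ of $\rquatre$. Under $\tau$ the two $\sd$-factors of $\grass$ are exchanged, so $|\gaussp_L(\bar\Sigma)|=|\gaussp_R(\Sigma)|$ and $|\gaussp_R(\bar\Sigma)|=|\gaussp_L(\Sigma)|$. Rigid motions of $\rquatre$ and the classical associate family $\omega\mapsto e^{i\theta}\omega$ preserve the individual Gauss areas, so the interpolation must be realized at the level of Weierstrass triples by mixing $g_1$ and $g_2$ in a way that preserves both the holomorphic isotropy and the metric identity above. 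Granted such a family, the continuous function $t\mapsto|\gaussp_L(\Sigma_t)|-|\gaussp_R(\Sigma_t)|$ has opposite signs at the endpoints (or vanishes identically, in which case $\Sigma^*=\Sigma$ already works); the intermediate value theorem then produces $t^*\in[0,1]$ at which $|\gaussp_L(\Sigma_{t^*})|=|\gaussp_R(\Sigma_{t^*})|$, and I would set $\Sigma^*:=\Sigma_{t^*}$.

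For claim~(2), I would use that the Jacobi operator of a minimal surface in $\rquatre$ takes the form $L=-\Delta^\perp-|A|^2$ on sections of the normal bundle, and that $|A|^2=-2K$ is purely intrinsic (being a consequence of the Gauss equation in the minimal case). Since $\Sigma$ and $\Sigma^*$ are isometric, their scalar Laplacians and their $|A|^2$ coincide. The normal bundles, naturally Hermitian line bundles over the common abstract surface, may be identified via the holomorphic quartic differential prescribed by the Codazzi equations, and under this identification the two Jacobi quadratic forms are equivalent; a positive normal variation on $\Sigma^*$ is then transported to a positive normal variation on $\Sigma$.

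The main obstacle is the construction of the interpolating family $\Sigma_t$. Because rigid motions preserve each spherical area $|\gaussp_L(\Sigma)|$ and $|\gaussp_R(\Sigma)|$ separately, and the classical associate family leaves both Gauss maps pointwise unchanged, no combination of these operations alone can join $\Sigma$ to $\bar\Sigma$ through a nontrivial deformation of the Gauss areas. The deformation must therefore be effected directly in the moduli of Weierstrass triples $(g_1,g_2,\omega)$ under the metric constraint, and one must verify both the existence of such a family and the continuous dependence of the (unmultiplied) areas $|g_j(\Sigma_t)|$ on $t$; this is where the real technical work lies.
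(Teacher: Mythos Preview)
Your overall architecture matches the paper's: build a continuous family $\Sigma_t$ of isometric minimal immersions with $\Sigma_0=\Sigma$ and $\Sigma_1$ having the left/right Gauss areas swapped, then apply the intermediate value theorem. But you explicitly leave the construction of this family as ``where the real technical work lies,'' and that is precisely the content of the proposition. The paper supplies the missing idea: write the Weierstrass datum as $G=[e',f',g',h']\in Q_2\subset P(\quat\otimes\mathbb{C})$, where $Q_2=\{z_1z_2+z_3z_4=0\}$. The swap of the two $\mathbb{P}^1$-factors is induced by a specific linear map $S\in SO(4,\mathbb{R})$. The key observation is that the group $\mathcal{J}=U(4)\cap A\,SO(4,\mathbb{C})\,A^{-1}$ (where $A$ conjugates $Q_2$ to the standard quadric $Q_2'$) is path-connected, being conjugate to $SO(4,\mathbb{R})$; both $Id$ and $S$ lie in it, so there is a path $\gamma_t\in\mathcal{J}$ from $Id$ to $S$. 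Acting by $\gamma_t$ on $G$ gives Gauss data $G_t=\gamma_t G$ which stay in $Q_2$ (so the conformality condition $e'f'+g'h'=0$ is preserved and each $G_t$ integrates to a minimal immersion) and, because $\gamma_t\in U(4)$, the conformal factor $\lambda=(G,G)_{\mathbb{C}}$ is preserved pointwise, so all $\Sigma_t$ are isometric. This is exactly the deformation ``in the moduli of Weierstrass triples under the metric constraint'' you were seeking; without it your argument is only a plan.

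Your treatment of~(2) also has a gap. You argue that the full Jacobi operator $-\Delta^\perp-|A|^2$ transfers, but $\Delta^\perp$ depends on the normal connection, whose curvature $K^N$ is \emph{not} an isometry invariant (indeed the family $\Sigma_t$ above changes $K^N$); your proposed identification of normal bundles ``via the holomorphic quartic differential'' does not obviously intertwine the two $\Delta^\perp$'s. The paper avoids this entirely: stability follows from positivity of the \emph{scalar} operator $\Delta_\Sigma-2K^T$ (this is how Theorem~\ref{thm1}/Proposition~\ref{hypequi} actually produce stability, via the inequality $\sum\langle B_{ij},\xi\rangle^2\le -2K^T|\xi|^2$), and that operator is purely intrinsic, hence constant along the isometric family.
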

Finally one  notices   in Section \ref{conjecture}   that the    domain  of stability  of Proposition \ref{hypequi}  and Corollary \ref{cor1} can be enlarged. 
Also, the consideration of complex curves show that  the domain of stability  as illustrated  in Figure \ref{fig}  must contain the  left and bottom edge of the square. 
Hence the  domain 4 of Figure   \ref{fig}  may  reasonnably represent a larger domain of stability    and   a stronger result than
Theorem  \ref{thm4} might  then  be true.
 \begin{ques}
 Let    $\Sigma$ be a   minimal surface in  $\mathbb{R}^4$.  If    $|\gaussp_L(\Sigma)| + |\gaussp_R(\Sigma)|  < 2\pi $ then  is  $\Sigma$  stable?
\end{ques}


\section{\normalfont Quaternions and Gauss maps }
The use of quaternions allows us to combine the classical approach to the Grassmannian $G^+(2,4)$  as a quadric in $\mathbb{C}P^3$ (see for example \cite{HO}) with Eells-Salamon's approach {\it via} complex structures on $\mathbb{R}^4$ (\cite{ES}).\\
We  identify  $\mathbb{R}^4$   with  $\cdeux$ and with  the quaternions $\quat =\mathbb{R}  \oplus \mathbb{R} I \oplus \mathbb{R}J \oplus \mathbb{R} K$ ; we choose the identification 
such that the canonical complex structure on  $\cdeux$  will be identified to $I$. Let us recall some basic facts.
 Each quaternion is the sum of a real and a pure quaternion $\quatP$: 
	$X = a + bI+cJ+dK = a + P_X$.
The product of two quaternions     is then given by: 
\begin{equation}\label{produit}
 X\cdot Y = aa' - <P_X,P_Y>  + a P_Y + a' P_X + P_X\wedge P_Y 
\end{equation}
where  $X =  a + P_X, Y   = a' + P_Y$\ and $\wedge$ is the cross product of $\rtrois$. The conjugate of  $X$ is denoted by $\overline{X} =  a - P_X$ and $\Re(X) = a$. 
The  Euclidean scalar product  on $\quat$ is then given by :
 $\langle X,Y\rangle :=  \Re(X\cdot\overline{Y}),   \quad  X, Y \in \quat $.
 We  identify  the  set of unit quats  : $\quatU$  with  the 3- sphere  $\st$ of radius $1$ around $O$  .
Similarly, we identify   the  set of pure quats  $\quatP$ - ie ${\rm span}( I,J,K)$-   with  $ \rtrois$.
With these identifications    $\quatUP := \quatP \cap\quatU $ is a  2-sphere $\sd$ which is an  equator of  the unit quats $\quatU= \st$.\\

 \subsection{ \normalfont The Grassmannian $G^+(2,4)$ } \label{grass}
 For any  oriented  vector plane $P \in  \quat$ choose any  orthogonal  positive basis of vectors of same length  $\{T_1,T_2\}$;  then $q := T_2.T_1^{-1}$ doesn't depend on the choice of 
 the  direct orthogonal basis of $P$  and $q$  is the only pure  unit quaternion
 -or complex  orthogonal structure-  that leaves the oriented plane invariant when operating on the left
 ($ q T_1 = T_2T_1^{-1}T_1 = T_2,$ and $qT_2 = q^2 T_1 = -T_1$. It is also   useful to notice that    $X^{-1} = \frac{\overline{X}}{|X|^2}$).\\
Proceeding  identically  for the  right multiplication
   we define a map from the Grassmannian of oriented planes of $\rquatre$ to $\sdeux \times \sdeux$:
 
   \begin{equation*}
  G = (g_L,  g_R) :
    \left(     
    \begin{array}{ll}
 G^+(2,4)  &\longrightarrow  \sd \times \sd \\
  P & \mapsto  (T_2 \cdot T_1^{-1},  T_1^{-1}\cdot  T_2)
  \end{array}
  \right)
   \end{equation*} 
As  $G$  is onto, $g$ is 1 to 1 and onto.\\
{\sl  We then identify  the Grassmannian  $ G^+(2,4)  $ with $\sd\times\sd$ via this    1-1 map  $G$}
where $\sd$ is the round sphere of radius one.
\begin{rem}
Notice that  $G^+(2,4)$ provided with its natural metric is isometric to $\sd \left( \frac{1}{\sqrt{2}}  \right) \times  \sd \left( \frac{1}{\sqrt{2}}  \right) $. Thus,  although 
we will make the computations in  $\sd\times\sd$,  results in  final statements  will be  expressed in terms of the Gauss maps with values in  the standard Grassmannian as in the introduction. 
\end{rem}
 
	\subsection{\normalfont  Left and right  Gauss maps of  $T\Sigma$ and $N\Sigma$ in isothermal coordinates}\label{gauss}
	Let $\Sigma$ be a $C^2$-surface. Around any point $p\in \Sigma$ we will  choose  local   isothermal   parameters (x,y) for the immersion
	$X: U \longrightarrow \Sigma\subset \quat$. Whether we consider the tangent bundle $T\Sigma$ or  the normal bundle $N\Sigma$ then two Gauss maps are generated.
	
Let us first consider the Gauss map of the tangent bundle.   At  each tangent plane  $T_p\Sigma$, we  compute,   at  point $p= X(x,y)$, the derivatives $X_x, X_y \in T_p\Sigma $ ; we  define  the  ( tangent)  Gauss map of  $\Sigma$  by :
 \begin{equation*}
 G := (\gaussp_L , \gaussp_R ): \left(
\begin{array}{cc} 
 \Sigma &  \longrightarrow   \sd \times \sd = \quat_{PU} \times   \quat_{PU}\\
 p & \mapsto   (X_y  \cdot    X_x^{-1} , X_x^{-1}  \cdot   X_y )
\end{array}
\right)
\end{equation*}
 
 $\gaussp_L (p) $  (resp.   $\gaussp_R (p) $ ) is the orthogonal  complex structure that leaves  the tangent  plane at $p$  invariant when acting  on the  left   
  (resp. on the right).\\
  We then consider the  Gauss map of the normal bundle. The useful information here is recalled in a lemma :
  \begin{lem}  Left  (resp. right) multiplication  by $\gaussp_L$   (resp. by  $\gaussp_R$ ) on $\quat$  both define rotations of $\quat$ such that the angle between any quaternion and its image 
  is of absolute value $\pi/2$.  Their respective actions are equal on $T\Sigma$ and of opposite sign on $N\Sigma$.
  \end{lem}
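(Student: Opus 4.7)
The plan is to treat each assertion in turn: the rotation and $\pi/2$-angle statement, the agreement on $T\Sigma$, and the sign flip on $N\Sigma$. The last of these is the only step that requires more than a direct quaternionic computation.

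For the rotation and angle statement I would let $q\in\quatUP$ be any pure unit quaternion and note that $|qX|^2 = qX\cdot\overline{qX} = qX\bar X\bar q = |X|^2\, q\bar q = |X|^2$, so $L_q:X\mapsto qX$ is an isometry of $\quat\cong\rquatre$. Using $\bar q = -q$ together with $\langle X,Y\rangle = \Re(X\bar Y)$,
\[
\langle X,qX\rangle = \Re(X\bar X\bar q) = |X|^2\Re(\bar q) = 0,
\]
so $X$ and $qX$ make an angle of $\pi/2$. Applied to $q=\gaussp_L$ this handles $L_{\gaussp_L}$, and the symmetric calculation covers right multiplication by $\gaussp_R$.

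For the equality on $T\Sigma$, I would plug the definitions $\gaussp_L = X_y X_x^{-1}$ and $\gaussp_R = X_x^{-1} X_y$ directly into the identities to be checked, which gives $\gaussp_L\cdot X_x = X_y = X_x\cdot\gaussp_R$ immediately; using $\gaussp_L^2 = \gaussp_R^2 = -1$ one likewise finds $\gaussp_L\cdot X_y = -X_x = X_y\cdot\gaussp_R$. By $\mathbb{R}$-linearity the identity $\gaussp_L\cdot T = T\cdot\gaussp_R$ extends to every $T\in T_p\Sigma$.

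The heart of the lemma is the sign flip on $N\Sigma$, and my plan is to examine the auxiliary map $\Phi:\quat\to\quat$ defined by $\Phi(X) = \gaussp_L\cdot X\cdot\gaussp_R$. Left and right multiplications commute and each of $L_{\gaussp_L}, R_{\gaussp_R}$ squares to $-\mathrm{Id}$, so $\Phi$ is an orthogonal involution, hence symmetric, yielding an orthogonal decomposition $\quat = E_+\oplus E_-$ into its $\pm 1$ eigenspaces. Using $\gaussp_R^{-1} = -\gaussp_R$, the eigenvalue equations rewrite as $E_- = \{X:\gaussp_L X = X\gaussp_R\}$ and $E_+ = \{X:\gaussp_L X = -X\gaussp_R\}$. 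The previous step places $T_p\Sigma\subseteq E_-$; but $E_-=\quat$ would force (taking $X=1$) $\gaussp_L=\gaussp_R$ and then $\gaussp_L$ to commute with every quaternion, hence to be real, contradicting $\gaussp_L\in\quatUP$. Therefore $\dim E_-=2$, so $E_- = T_p\Sigma$ and $E_+ = N_p\Sigma$, which is exactly $\gaussp_L\cdot N = -N\cdot\gaussp_R$ for every $N\in N\Sigma$. The main obstacle --- really the only non-routine point --- is recognizing that $\Phi$ is a symmetric involution, after which both assertions of the lemma reduce to reading off its two eigenvalue equations against the tangent/normal splitting.
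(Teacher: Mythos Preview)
Your argument is sound in spirit and the involution device is a nice packaging, but there is one genuine logical gap in the last step. From $T_p\Sigma\subseteq E_-$ you obtain $\dim E_-\ge 2$, and from $E_-\ne\quat$ you obtain $\dim E_-\le 3$; you then jump to $\dim E_-=2$ without excluding $\dim E_-=3$. The missing observation is that $\Phi=L_{\gaussp_L}\circ R_{\gaussp_R}$ lies in $SO(4)$ (each factor is unit-quaternion multiplication, hence path-connected to the identity in $O(4)$), so $\det\Phi=1$; since $\Phi$ is an involution, $\det\Phi=(-1)^{\dim E_-}$, which forces $\dim E_-$ to be even and therefore equal to $2$. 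With that one sentence added, your proof is complete.

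The paper's own argument for the sign flip is more hands-on: since $L_{\gaussp_L}$ and $R_{\gaussp_R}$ are isometries preserving $T\Sigma$, each preserves $N\Sigma$ as well; restricted to the $2$-plane $N\Sigma$, each squares to $-\mathrm{Id}$ and is thus a $\pm\pi/2$ rotation; if the two restrictions coincided one would have $\gaussp_L X=X\gaussp_R$ on all of $\quat$, leading to the same contradiction you reach, so the two rotations on $N\Sigma$ must be opposite, i.e.\ $\gaussp_L N=-N\gaussp_R$. Your eigenspace approach is arguably cleaner---you never need to argue separately that $N\Sigma$ is invariant under each map, since $E_+=N_p\Sigma$ drops out of the spectral decomposition---but it does require the determinant/parity remark to close, whereas the paper's version trades that for the explicit $\pm\pi/2$-rotation dichotomy on a $2$-plane.
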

  \begin{proof}
  Suppose $q\in \quat_U$.  Then left or right multiplication  by $q$ defines an isometry:
  $\langle q\cdot x,  q\cdot x\rangle = \Re( qx\bar x\bar q) = |x|^2 $.
  And  $\langle q\cdot x,   x\rangle = \Re( qx\bar x) =\Re(q)  |x|^2 $.  Hence the isometry is a rotation  and the  absolute angle between any nonzero  quaternion and its image is constant.
 If $q\in \quatUP^*$ then then absolute angle is $\pi/2$.  By construction,   left multiplication  by $\gaussp_L$  and   right multiplication by  $\gaussp_R$ restricted to $T\Sigma$  are equal to the same $\pi/2$- rotation.
Since both  are isometries, the normal bundle  is  stable and the restriction of both rotations to $N\Sigma$  are   $\pi/2$-rotations.  If they were equal, 
then $\gaussp_Lx = x \gaussp_R$ for any $x\in\quat$. Hence  $\gaussp_L= \gaussp_R$
and $\gaussp_L $ commutes with any quaternion, which is impossible since $\gaussp_L $ is a nonzero pure quaternion. Hence the rotation of $\gaussp_L$  and $\gaussp_R$ on $N\Sigma$  are of opposite angle 
so  $\gaussp_L x =  x\gaussp_R^{-1} = -x\gaussp_R$ for any $x \in N\Sigma$ since the angle is $\pi/2$ .
  \end{proof}
  The action of $\gaussp_L$ on the left is a rotation by the lemma above. Hence it  preserves the orientation of $T_p\Sigma \oplus N_p\Sigma$  and, thus,  acts positively on $N_p\Sigma$.
Hence the  left  Gauss map $\gaussp_{N\Sigma, L} $  of the normal bundle    $N\Sigma$  verifies $\gaussp_{N\Sigma, L} = \gaussp_L$.
Moreover we deduce from the lemma  that the  right Gauss map  $\gaussp_{N\Sigma,R}$   of the  normal bundle  verifies $\gaussp_{N\Sigma,R} = \gaussp_R^{-1}$. 
 The expression of the normal Gauss map in  
 isothermal coordinates is thus given  by :
 \begin{equation*}
G_{N\Sigma} : \left(
\begin{array}{ll} 
 \Sigma &  \longrightarrow \sd \times \sd \\
 p & \mapsto   (  X_y\cdot X_x^{-1} ,  X_y^{-1} \cdot  X_x)

\end{array}
\right)
\end{equation*}
 \noindent and  $ G_{N\Sigma} =    ( Id,-Id)  \circ  G $. 
\begin{rem} We will not need it here but one can prove that $\mathfrak{g}_L$ (resp. $\mathfrak{g}_R$) is a rotation d'angle $+\frac{\pi}{2}$ (resp. $-\frac{\pi}{2}$) on $N\Sigma$.
\end{rem}
 
\section{\normalfont First derivatives of $\gaussp_L$  and $\gaussp_R$ for minimal surfaces}\label{trois}
In order to prove the results of stability we will need  some properties of the left and right Gauss maps.  We will use  local   isothermal coordinates  on a Riemann surface $U$ for the immersion 
$X : U \longrightarrow \Sigma \subset \quat$ with these notations :
$ E := |X_x|^2 = |X_y|^2, \langle X_x, X_y\rangle=0$ , the induced metric of $\Sigma$ on $U$ being $ds^2_\Sigma = E |dz|^2$ with $ z = x+iy$. 
\begin{prop} \label{propSurG}The left Gauss map $\gaussp_{L}$ and right Gauss map $\gaussp_{R}$  of the Gauss map  $ g:= (\gaussp_L,\gaussp_R) $  of the tangent bundle  of a minimal surface in $\mathbb{R}^4$ satisfy the following properties: 
\begin{enumerate}
\item  Left multiplication  by $\gaussp_{L}$ defines a complex structure on $T\Sigma$ and on $N\Sigma$. Right multiplication by $\gaussp_R$ defines the same complex structure on 
$T\Sigma$ but the antiholomorphic structure on $N\Sigma$. The values  of  $\gaussp_{L}$  and   $\gaussp_{R}$  are pure unitary quaternions and 
$$\gaussp_{L}^2 = \gaussp_{R}^2 = -1.$$
In particular,  $\gaussp_{L,x}$ and $\gaussp_L$  (resp.     $\gaussp_{R,x}$ and $\gaussp_R$) anticommute.
\item The Gauss maps $\gaussp_L$  and $\gaussp_R$ are anti- holomorphic  for the $\gaussp_L$- or   $\gaussp_R$ -complex structure on $\Sigma$. More precisely
$$ \gaussp_{L,y }= -\gaussp_L \gaussp_{L,x}, \quad  \gaussp_{R,y} =   - \gaussp_{R,x}    \gaussp_{R}. $$
Notice that the values of the maps  $\gauss_{X,a}$ are pure quaternions  where $ X = L,R, \quad a = x,y $.
\item    Let $B_{ab} $ be  the second fundamental form defined by the normal projections  of the second  derivatives of the position  vector $X_{ab}^N$, where $a,b = x \ {\rm or } \ y$. We have:
$$\gaussp_{L,x} =  -\gaussp_L ( B_{xx} + \gaussp_L B_{xy})X_x^{-1} ,\quad \gaussp_{R,x} =  - X_x^{-1} (B_{xx} +B_{xy}\gaussp_{R})\gaussp_{R}.$$
Notice that left multiplication by $\gaussp_{L,x}$  or   $\gaussp_{L,y}$  ( resp. right multiplication by $\gaussp_{R,x}$ or $\gaussp_{R,y}$) permutes $T\Sigma$ and $N\Sigma$ and are also elements
of $T\mathbb{S}^2$.
\item The quaternion fields
$$ B_{xx}+\gaussp_L B_{xy}  (= \gaussp_{L,y} X_x ) , \quad    B_{xx}+ B_{xy}\gaussp_R  (=X_x \gaussp_{R,y} ) $$
are anti- holomorphic    sections of  the normal bundle of $\Sigma$.
\item  We have: 
\begin{equation} 
\left\{
\begin{array}{cc}\label{coeff}
E  |\gaussp_{L,x} |^2   = &   \frac{1}{2} |B|^2 + \delta    \\ 
E|\gaussp_{R,x} |^2   = & \frac{1}{2}|B|^2 - \delta  
\end{array}
\right.
\end{equation}
where  $ | B| ^2   :=  2 |B_{xx}|^2 +2 |B_{xy}|^2,\quad  \delta =  2\langle B_{xx}, \gaussp_L B_{xy}  \rangle$\\
\end{enumerate}
\end{prop}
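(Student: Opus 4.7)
All five items are anchored in the two identities $\gaussp_L X_x=X_y$ and $X_x\gaussp_R=X_y$ (immediate from $\gaussp_L=X_y\cdot X_x^{-1}$, $\gaussp_R=X_x^{-1}\cdot X_y$), together with the minimal-surface condition in isothermal coordinates, $X_{xx}+X_{yy}=0$. For item (1), the isothermal hypothesis $|X_x|=|X_y|$, $\langle X_x,X_y\rangle=0$ forces $|\gaussp_L|=1$ and $\Re\gaussp_L=0$, so $\gaussp_L\in\quatUP$ and hence $\gaussp_L^2=-1$; the rest repackages the lemma of Section~2 (left multiplication by $\gaussp_L$ is a $\pi/2$-rotation on each of $T\Sigma$ and $N\Sigma$, and coincides with right multiplication by $\gaussp_R$ on $T\Sigma$ but with its opposite on $N\Sigma$). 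Differentiating $\gaussp_L^2=-1$ gives the anticommutation $\gaussp_{L,x}\gaussp_L=-\gaussp_L\gaussp_{L,x}$, and symmetrically for $\gaussp_R$. For item (2), differentiate $\gaussp_L X_x=X_y$ in $x$ and $y$ to get $\gaussp_{L,x}X_x=X_{xy}-\gaussp_L X_{xx}$ and $\gaussp_{L,y}X_x=X_{yy}-\gaussp_L X_{xy}$; injecting $X_{yy}=-X_{xx}$ and comparing with $-\gaussp_L\cdot(X_{xy}-\gaussp_L X_{xx})$ yields $\gaussp_{L,y}=-\gaussp_L\gaussp_{L,x}$, and the argument for $\gaussp_R$ is identical starting from $X_x\gaussp_R=X_y$.

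For item (3), one reuses $\gaussp_{L,x}X_x=X_{xy}-\gaussp_L X_{xx}$ and writes $X_{xx}=B_{xx}+X_{xx}^T$, $X_{xy}=B_{xy}+X_{xy}^T$. The tangential parts in isothermal coordinates are $X_{xx}^T=\tfrac{1}{2E}(E_xX_x-E_yX_y)$ and $X_{xy}^T=\tfrac{1}{2E}(E_yX_x+E_xX_y)$; since $\gaussp_L X_x=X_y$ and $\gaussp_L X_y=-X_x$, one checks $\gaussp_L X_{xx}^T=X_{xy}^T$, so the tangential contributions cancel and only the normal terms $B_{xy}-\gaussp_L B_{xx}=-\gaussp_L(B_{xx}+\gaussp_L B_{xy})$ survive; the formula for $\gaussp_{R,x}$ is obtained symmetrically. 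Item (5) then falls out by taking square norms: $E|\gaussp_{L,x}|^2=|B_{xx}+\gaussp_L B_{xy}|^2=|B_{xx}|^2+|B_{xy}|^2+2\langle B_{xx},\gaussp_L B_{xy}\rangle=\tfrac{1}{2}|B|^2+\delta$, while for the right side the cross term is $2\langle B_{xx},B_{xy}\gaussp_R\rangle$, and the lemma of Section~2 gives $B_{xy}\gaussp_R=-\gaussp_L B_{xy}$ on $N\Sigma$, so the sign flips.

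The main obstacle is item (4): showing that $\sigma_L:=B_{xx}+\gaussp_L B_{xy}$ and $\sigma_R:=B_{xx}+B_{xy}\gaussp_R$ are antiholomorphic sections of $N\Sigma$ for the complex structures defined in item (1). The strategy is to compute the normal covariant derivatives $\nabla^{\perp}_x\sigma_L$ and $\nabla^{\perp}_y\sigma_L$ by differentiating the identification $\sigma_L=\pm\gaussp_{L,y}X_x$ from item (3), then splitting the resulting third-order expressions into tangential and normal parts exactly as in item~(3). The Codazzi equation $\nabla^{\perp}_xB_{xy}=\nabla^{\perp}_yB_{xx}$ together with the minimality relation $B_{yy}=-B_{xx}$ collapse the third derivatives of $X$, and the extra pieces coming from $\partial\gaussp_L$ are absorbed by the anticommutation $\gaussp_L\gaussp_{L,x}=-\gaussp_{L,x}\gaussp_L$ and by $\gaussp_{L,y}=-\gaussp_L\gaussp_{L,x}$ from items (1)-(2). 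The desired Cauchy--Riemann identity $\nabla^{\perp}_y\sigma_L=-\gaussp_L\nabla^{\perp}_x\sigma_L$ then emerges; the argument for $\sigma_R$ is the mirror one, using right multiplication by $\gaussp_R$ instead. The delicate point is book-keeping the fact that $\gaussp_L$ itself is not parallel in $N\Sigma$, so its variation must be tracked carefully against Codazzi to make the normal derivative of $\sigma_L$ collapse as needed.
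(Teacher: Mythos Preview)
Your proposal is correct and, for items (1), (2) and (5), essentially identical to the paper's argument; the only cosmetic difference is that you differentiate the product identities $\gaussp_L X_x=X_y$ and $X_x\gaussp_R=X_y$, whereas the paper differentiates the quotients $\gaussp_L=X_yX_x^{-1}$ and $\gaussp_R=X_x^{-1}X_y$.

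For item (3) you take a slightly different route: you compute the tangential parts $X_{xx}^T,X_{xy}^T$ explicitly via the Christoffel symbols in isothermal coordinates and verify $\gaussp_L X_{xx}^T=X_{xy}^T$, so they cancel. The paper instead checks directly that $\langle X_{xx}+\gaussp_L X_{xy},X_x\rangle=\langle X_{xx}+\gaussp_L X_{xy},X_y\rangle=0$, which avoids writing out Christoffel symbols. Both arguments are fine; the paper's is marginally shorter.

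For item (4) your plan is more laborious than necessary. The paper simply differentiates the expression $\sigma_L=B_{xx}+\gaussp_L B_{xy}$ with the product rule, invokes Codazzi in the form $B_{xx,y}=B_{xy,x}$, $B_{xy,y}=B_{yy,x}$, together with $B_{yy}=-B_{xx}$ and $\gaussp_{L,y}=-\gaussp_L\gaussp_{L,x}$, and the identity $(\sigma_L)_y=-\gaussp_L(\sigma_L)_x$ falls out in two lines. You propose instead to pass through $\sigma_L=\pm\gaussp_{L,y}X_x$, compute normal covariant derivatives, and track the non-parallelism of $\gaussp_L$; this works but introduces extra book-keeping. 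In fact there is a route simpler than either: since item (3) already gives $\sigma_L=X_{xx}+\gaussp_L X_{xy}$ (with the full second derivatives of $X$, not just their normal parts), one computes directly
\[
(\sigma_L)_y+\gaussp_L(\sigma_L)_x
= X_{xxy}+\gaussp_{L,y}X_{xy}+\gaussp_L X_{xyy}
+\gaussp_L X_{xxx}+\gaussp_L\gaussp_{L,x}X_{xy}-X_{xxy}
=\gaussp_L\,\partial_x(X_{xx}+X_{yy})=0,
\]
using only $\gaussp_{L,y}=-\gaussp_L\gaussp_{L,x}$, $\gaussp_L^2=-1$, and harmonicity $\Delta X=0$. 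No Codazzi or normal projection is needed.
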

 \begin{proof}
 
 \begin{enumerate} 
 \item  is clear.
 \item
 We  first show that the right Gauss map  $\gaussp_R$ is  anti-conformal.\\
Recall that $X : U \longrightarrow  \Sigma  \subset\quat $  be  a minimal   immersion  then 
for  isothermal coordinates   we have  $|X_x |= |X_y|, X_x \perp X_y $.
 If $X$ minimal   then $\Delta X = X_{xx}+X_{yy} = 0$ 
 

In order to prove 
$$\gaussp_{R,y} =   - \gaussp_{R,x}    \gaussp_{R} $$
one  first computes :
$$\gaussp_{R,x} =   (X_x^{-1} X_y)_x =X_x^{-1}(-X_{xx} X^{-1}_xX_y+ X_{xy}  )  =  X_x^{-1}(-X_{xx} \gaussp_{R} + X_{xy}  )  $$
and :
$$\gaussp_{R,y} =   (X_x^{-1} X_y)_y =X_x^{-1}(-X_{xy} X^{-1}_xX_y+ X_{yy}  ) = -X_x^{-1}(X_{xy} -X_{xx}  \gaussp_{R}  )  \gaussp_{R}  = - \gaussp_{R,x}    \gaussp_{R} .$$
Similarly one proves that 
\begin{equation}\label{conf1}
\gaussp_{L,y} = -\gaussp \gaussp_{L,x}.
\end{equation}
From which one deduces  the conformal property of the map :
\begin{equation}\label{conformite}
|\gaussp_{L,x} |= |\gaussp_{L,y} | , \langle \gaussp_{L,x} , \gaussp_{L,y} \rangle =0.
\end{equation}
\item 
A direct computation gives :
$$\gaussp_{L,x} = (X_y X_x^{-1})_x = (X_{xy} -\gaussp_L X_{xx})X_x^{-1} = -\gaussp_L ( X_{xx} + \gaussp_L X_{xy})X_x^{-1} .$$
Similarly, for the right Gauss map
$$\gaussp_{R,x} = (X_x^{-1}X_y) _x =X_x^{-1} (X_{xy} -X_{xx}\gaussp_{R}) = - X_x^{-1} (X_{xx} +X_{xy}\gaussp_{R})\gaussp_{R}.$$

Thus from \eqref{conf1}
$$\gaussp_{L,y} X_x= -( B_{xx} + \gaussp_L B_{xy}).$$

\item
We need the following lemma:
\begin{lem}
  $X_{xx} +\gaussp X_{xy}$ (resp.   $ X_{xx} + X_{xy}\gaussp_{R}$)   is a  holomorphic normal section of $\Sigma$   equal to $B_{xx} + \gaussp_L B_{xy}$
(resp.   to       $ B_{xx} + B_{xy}\gaussp_{R}$)      \end{lem}
 \begin{proof}
 
Let us prove first  that there is no component along  $X_x$ :
$$  \langle \gaussp X_{xy}, X_x \rangle =  -\langle X_{xy}, X_y \rangle = \langle X_{x}, X_{yy} \rangle =-\langle X_{x}, X_{xx} \rangle$$
Hence 
$$  \langle X_{xx} +\gaussp_{L} X_{xy}, X_x \rangle =  0$$
Similarly  $  \langle X_{xx} +\gaussp_{L} X_{xy}, X_y \rangle =  0$.
Hence  $X_{xx} +\gaussp_{L} X_{xy}$ is a normal section hence equal to $B_{xx} +\gaussp_{L} B_{xy}$.\\
In order to prove the holomorphy, we write  Codazzi equations in  the isothermal coordinates $x,y$:
\begin{equation}
\left\{
\begin{array}{cc}
B_{xx,y} -  B_{xy,x} &= 0 \\
B_{xy,y} -  B_{yy,x} &= 0
\end{array}
 \right.
 \end{equation}

Hence, using the  Codazzi equations:
$$\left( B_{xx} + \gaussp_L B_{xy}\right)_y = B_{xx,y} + \gaussp_{L,y} B_{xy}  +  \gaussp_{L} B_{xy,y} = B_{xy,x} - \gaussp_L \gaussp_{L,x} B_{xy}  +  \gaussp_{L} B_{yy,x} . $$
Since  $\Sigma$ is minimal :    $B_{xx}= -B_{yy}$  and
$$\left( B_{xx} + \gaussp_L B_{xy}\right)_y = -\gaussp_L \left(B_{xx} + \gaussp_L B_{xy}\right)_x .$$
\end{proof}

\begin{rem}
The Gauss map  $\gaussp_{L, .}$ is of the form  $ v.w^{-1} $ where $v\in N\Sigma$ and $w\in T\Sigma$ from which it is clear that $\gaussp_{L,.} (T\Sigma )= N\Sigma$
 and  from which  one deduces also that  $\gaussp_{L,.} (N\Sigma )= T\Sigma$
 
\end{rem}

\item 
Let $E  = |X_x|^2 = |X_y|^2$ then 
\begin{equation}\label{getb}
|\gaussp_{L,x}|^2 = \frac{1}{E} | X_{xx} +\gaussp_L X_{xy}|^2 \ {\rm and} \    |\gaussp_{R,x}|^2 = \frac{1}{E} | X_{xx} + X_{xy}\gaussp_{R}|^2.
\end{equation}
Thus
$$E|\gaussp_{L,x} |^2 = |B_{xx} + \gaussp_L B_{xy} |^2 =  |B_{xx}|^2 + |B_{xy}|^2 + 2\langle B_{xx}, \gaussp_LB_{xy}   \rangle.$$
And 
$$E|\gaussp_{R,x} |^2 =   |B_{xx}|^2 + |B_{xy}|^2 + 2\langle B_{xx}, B_{xy}\gaussp_R   \rangle.$$
As
$\langle B_{xx}, B_{xy}\gaussp_R   \rangle = - \langle B_{xx}, \gaussp_LB_{xy}   \rangle$ we obtain :
\begin{equation}\label{equa}
\left\{
\begin{array}{cc}
  E|\gaussp_{L,x} |^2   = &  \frac{1}{2} |B|^2 + \delta    \\ 
E|\gaussp_{R,x} |^2   = &   \frac{1}{2}  |B|^2 - \delta  
\end{array}
\right.
\end{equation}
where  $ |B|^2  :=  2 |B_{xx}|^2 +2 |B_{xy}|^2$ and $ \delta =  2\langle B_{xx}, \gaussp_L B_{xy}  \rangle$.

\end{enumerate}
\end{proof}
\begin{rem}
 One can express the curvatures  of the tangent and normal bundle of a minimal surface in terms of the derivatives of the  left and right  Gauss maps in isothermal coordinates.\\
  From the Gauss equation  we deduce that the curvature tensor of the tangent bundle  is equal to \\
$R(X_x,X_y,X_y,X_x) := R_{xyyx} = \langle B_{xx},B_{yy}\rangle -  \langle B_{xy},B_{yx}\rangle = -|B_{xx}|^2-|B_{xy}|^2 .$\\
Hence  the tangent Gaussian curvature  computed in the chosen  isothermal coordinates is equal to:
$K^T := \frac{R_{xyyx} }{|X_x|^2|X_y|^2-\langle X_x,X_y\rangle^2} = -  \frac{|B|^2 }{2E^2}$.
 Similarly, the  Ricci equation gives for the normal curvature in terms of $B$:
 $\langle R^N(X_x,X_y)\xi, \eta\rangle =   \langle \left[  A_\xi, A_\eta\right] X_x,X_y \rangle$
where $\xi,\eta$  are normal vectors  and $A_\xi$ is the shape operator of the the second form $B$ in the direction $\xi$ so that $\langle A_\xi X , Y\rangle =  \langle B(X , Y), \xi \rangle$.
Choose   $\xi =e_1,\eta = e_2$  such that $\{X_x,X_y, e_1,e_2\}$ is a conformal frame in $\quat$. Let  $B^i_{ab} := \langle B_{ab},  e_i\rangle$ ( $i =1, 2,\quad a,b = x,y$). Then 
 $K^N :=  \frac{R^N_{xy12} }{E^2}   =\frac{\langle R^N(X_x,X_y)e_1, e_2\rangle}{E^2} = - \frac{2}{E}\left(  B^1_{xx} B^2_{xy} -B^2_{xx}B^2_{xy} \right)=  \frac{2}{E^2}\langle B_{xx}, \gaussp_L B_{xy}  \rangle
 = \frac{1}{E^2}\delta$.
From  equations \eqref{equa}  we deduce that
\begin{equation} \label{courbure}
\left\{
\begin{array}{cc}
K^T   & =   - \frac{1}{2E}   \left(  |\gaussp_{L,x} |^2+ |\gaussp_{R,x} |^2  \right)  \\ 
 K^N    & =      - \frac{1}{2E}   \left(  |\gaussp_{L,x} |^2- |\gaussp_{R,x} |^2  \right)
\end{array}
\right.
\end{equation}
We see in particular that  $|K^T|\geq |K^N|$.
 \end{rem}
\section{\normalfont Proof of Theorem 2 and corollaries}

Let us recall \cite{BDC}'s stability condition for minimal surfaces in $\mathbb{R}^3$.   
\begin{thm} \cite{BDC} Let $\Sigma$ be a minimal surface. 
If the spherical area   $g(\Sigma)\subset \sd $  in  the round  sphere of radius one   is   smaller than $2\pi$ then  $\Sigma$ is stable.
\end{thm}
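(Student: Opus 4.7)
The plan is to reduce stability to a spectral inequality on the spherical image $g(\Sigma)$ and then conclude by a Faber--Krahn type isoperimetric inequality on $\sd$.

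First, I would write out the second variation of area. For a normal deformation $fN$ vanishing on $\partial\Sigma$,
\[
\delta^2 A(f) = \int_\Sigma \bigl(|\nabla f|^2 - |A|^2 f^2\bigr)\, dA_\Sigma,
\]
so stability amounts to $\int_\Sigma |\nabla f|^2 \, dA_\Sigma \geq \int_\Sigma |A|^2 f^2 \, dA_\Sigma$ for all admissible $f$. Since $\Sigma$ is minimal in $\rtrois$, one has $|A|^2 = -2K$, so stability becomes $\int_\Sigma |\nabla f|^2 \geq -2\int_\Sigma K f^2$.

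Next I would transfer this inequality to $\sd$ via the Gauss map. The map $g\colon \Sigma\to\sd$ is anti-conformal with $g^*(dA_{\sd}) = -K\, dA_\Sigma$. In two dimensions the Dirichlet energy is conformally invariant, so if $g$ were a diffeomorphism onto its image $D=g(\Sigma)$, setting $\tilde f = f\circ g^{-1}$ we would have
\[
\int_\Sigma |\nabla f|^2 \, dA_\Sigma = \int_D |\nabla \tilde f|^2 \, dA_{\sd}, \qquad -\int_\Sigma K f^2 \, dA_\Sigma = \int_D \tilde f^2 \, dA_{\sd},
\]
and stability would be equivalent to $\lambda_1(D) \geq 2$, the first Dirichlet eigenvalue of $D \subset \sd$. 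For general $f$ when $g$ is not injective, I would descend via a fiberwise construction (replacing $f$ by an $\ell^2$-sum of its values on $g$-fibers over each point of $D$) and use a Kato-type inequality $|\nabla |F|| \leq |\nabla F|$ to bound the Dirichlet energy above by that of the original $f$, while the weighted $L^2$ integral is preserved. This reduces the problem to the same spectral inequality on the image $D$ counted without multiplicity.

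Finally I would appeal to the isoperimetric inequality on $\sd$. For any domain $D\subset\sd$ with $|D|<2\pi$, Schwarz symmetrization produces a geodesic cap $D^*$ of the same area with $\lambda_1(D)\geq \lambda_1(D^*)$. A direct computation (solving the corresponding ODE on a spherical cap, or using that the hemisphere has $\lambda_1=2$ with eigenfunction a coordinate function) gives $\lambda_1(D^*)>2$ whenever $|D^*|<2\pi$. Combined with the previous step this yields stability.

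The main obstacle is the multiplicity issue in Step~2: the total pullback $\int_\Sigma(-K)\,dA_\Sigma$ equals the Gauss area counted with multiplicity, which can exceed $2\pi$ even when $|g(\Sigma)|<2\pi$ without multiplicity. The whole argument hinges on showing that the relevant Rayleigh quotient controlling the second variation is governed by the geometry of the image $g(\Sigma)$ rather than of the multi-sheeted covering $g\colon\Sigma\to g(\Sigma)$; this is precisely where the fibered reduction and the Kato inequality enter, and it is the subtle step that would require the most care.
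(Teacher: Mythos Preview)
This theorem is cited (not proved) in the paper; your outline is the classical Barbosa--do~Carmo argument from \cite{BDC}, \cite{BDC2}, and it matches exactly the scheme the paper then applies factor-by-factor to $\gaussp_L$ and $\gaussp_R$ in its proof of Theorem~\ref{thm1}: conformality of the Gauss map, Rayleigh transfer to the spherical image, the fiber-sum reduction for multiplicities (flagged there too as the delicate point, with details deferred to \cite{BDC2}), and the spherical eigenvalue lower bound giving $\lambda_1(D)>2$ when $|D|<2\pi$. Your $\ell^2$-fiber-sum together with the Kato inequality is precisely the device that makes the multiplicity step go through.
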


We first  follow the proof of    \cite{BDC}  applied to the right and left Gauss maps.
\begin{proof} 
 Let $ X : U \longrightarrow \Sigma \subset \mathbb{R}^4$ be   the minimal immersion. 
 Suppose the first Dirichlet  eigenvalue    of the  spherical   image $D :=\gaussp_L (U)$  is  $\lambda$;   then  from Rayleigh's formula
 \begin{equation}\label{inegalite}
 \int_D |\nabla f|^2_{\sd}da_{\sd } -  \lambda\int_D f^2da_{\sd} \geq 0
 \end{equation}
 for all compact support functions  $f : D \longrightarrow \mathbb{R}$.\\
 From the conformality of $\gaussp_L$ and from  equations  \eqref{conformite},  the left Gauss map $\gaussp_L$  induces on $U$ 
a  spherical area and  a spherical pseudo-metric  which in terms of the pulled-back metric element  $ds^2_\Sigma$ or area element $da_\Sigma$  of $\Sigma$ on $U$ are  -using equations \eqref{courbure}-   given by  
\begin{equation}
\gaussp_L^*(da_{\sd} ) = \frac{|\gaussp_{L,x}|^2}{E} da_\Sigma = -(K^T+K^N)  da_\Sigma  \quad  \gaussp_L^*(ds^2_{\sd} ) =-(K^T+K^N) ds^2_\Sigma.
\end{equation}
On a subdomain $U'\subset U$ where  $\gaussp_L$ is a diffeomorphism,  $ |\gaussp_{L,x}|^2 ds^2$ is a metric element  and 
$|\nabla (f\circ\gaussp_L) |_\Sigma^2 da_\Sigma =  |\nabla f|^2_{\sd} da_{\sd} .$ Thus 
 \begin{equation}\label{inegalitebis}
 \int_{\gaussp_L(U')}\left(  |\nabla f|^2_{\sd} - \lambda f^2\right) da_{\sd}   =  \int_{U'} \left(  |\nabla (f\circ\gaussp_L)|_\Sigma^2 + \lambda( K^T+K^N)(f\circ\gaussp_L)^2\right) da_\Sigma. 
 \end{equation}
 
 The functional  defined by the RHS of \eqref{inegalitebis} extends to $U$  and   \cite{BDC2} show  that   for all compact support functions  $f : U \longrightarrow \mathbb{R}$
  \begin{equation}\label{inegalite2}
\int_{U} \left(  |\nabla f|_\Sigma^2 + \lambda( K^T+K^N)f^2\right) da_\Sigma \geq 0 \end{equation}
( 
 if the LHS were negative for a compact support function $f$ on $U$ , then  the function on $D$ defined by  $\bar f(x) := \sum_{\gaussp_L(y) = x} f(y)$ would not satisfy inequality 
\eqref{inegalite}; for more details see \cite{BDC2}).\\
The same considerations for the right Gauss map yields also - for the first Dirichlet eigenvalue $\mu$ of the spherical domain $\gaussp_R(U)$:
$$\int_U |\nabla f|_\Sigma^2 da_\Sigma + \mu\int_U(K^T-K^N)f^2da_\Sigma $$
and  combining linearly  the former  two inequalities  and applying equations \eqref{courbure}:
\begin{equation}\label{stable}
\int_U |\nabla f|_\Sigma^2 da_\Sigma +\frac{2\lambda\mu}{\lambda+\mu}\int_U K^T f^2da_\Sigma \geq 0
\end{equation}
On the other hand  recall that   the surface $\Sigma$ is stable  in $\mathbb{R}^4$  if for  any normal section $\xi$  with compact support  which vanishes at the boundary  
 ( cf. for example \cite{L}) :
\begin{equation}\label{stableNormalante}
\int_U\left(  |\nabla^N \xi |_\Sigma^2   - \sum_{i,j= 1,2 }   \langle  B_{ij},\xi   \rangle^2 \right)   da_\Sigma  \geq 0
\end{equation}
where $\nabla^N$ is the induced connection on the normal bundle of $\Sigma$ and where $B_{ij}$ are the coordinates of the second form wrt to an orthonornal basis of $T_p\Sigma$ - for example  wrt
$\{\frac{X_x}{\sqrt{E}}, \frac{X_y}{\sqrt{E}}\}$. Hence  :
\begin{equation}\label{stableNormal}
\int_U\left(  |\nabla^N \xi |_\Sigma^2   - \sum_{i,j= x,y } \frac{1}{E^2}  \langle  B_{ij},\xi   \rangle^2 \right)   da_\Sigma  \geq 0
\end{equation}

Since $$  \frac{1}{E^2}   \sum_{i,j= x,y}   \langle  B_{ij},\xi \rangle^2 \leq -2K^T |\xi|^2 ,$$
the LHS of inequality \eqref{stableNormal} is larger than 
 
\begin{equation}\label{stable10}
\int_U |\nabla^N \xi|^2_\Sigma da_\Sigma +2 \int_U K^T |\xi|^2 da_ \Sigma.
\end{equation}
Choose    $\xi = f \zeta$ where $f$ is of compact support and where $\zeta$ is a unitary vector field. Using the same notations as in Proposition \ref{propSurG}
then 
\begin{equation}\label{energie}
\int_U |\nabla^N \xi |_\Sigma^2 da_\Sigma = \int_U |\nabla^N \zeta |_\Sigma^2 f^2  da_\Sigma  + \int |\nabla f |_\Sigma^2 da_\Sigma.
\end{equation}
Replacing in  \ref{stable10} the first term  by \ref{energie} and using \ref{stable}, we obtain 
\begin{equation}\label{equaInequa}
\int |\nabla^N \zeta |_\Sigma^2 f^2  ds +\left( 2 -  \frac{2\lambda\mu}{\lambda+\mu}  \right) \int  K^T  f^2ds \end{equation}
   which is positive if 
\begin{equation}\label{coeffL}
 \frac{\lambda\mu}{\lambda+\mu} > 1
 \end{equation}
that is    if the harmonic mean  of  the eigenvalues $\lambda$ and $\mu$ is larger than 2.
 \end{proof}
 \subsection{\normalfont Stability in terms of the  spherical area of the Gauss maps} \label{conjecture}
One can deduce from the isoperimetric  inequality in the round sphere  a lowerbound for the first Dirichlet eigenvalue of the spherical domain $D$ of area $A$ as in \cite{BDC2}:
\begin{equation}\label{iso}
\lambda_1(A) \geq \frac{2(4\pi-A)}{A} = 2( \frac{1}{a}- 1)
 \end{equation}
  where $a := \frac{A}{4\pi}$ is the proportionate  area of $D $. Hence 
Inequality \eqref{coeffL} is true if 
$$ \frac{1}{\lambda} +   \frac{1}{\mu}  \leq \frac{a}{2( 1-a ) }    +   \frac{b}{2( 1-b ) }  \leq 1$$
Therefore :\label{page:hypequi1}
\begin{prop}\label{harmonique}\label{hypequi}
If   the proportionate area $a = \frac{|\gaussp_L(\Sigma) |}{2\pi} $  and $b = \frac{|\gaussp_L(\Sigma) |}{2\pi} $ satisfy \\
$a \leq \frac{2}{3},\ b \leq \frac{2-3a}{3-4a}  $  then $\Sigma$ is stable.
\end{prop}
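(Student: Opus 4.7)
The plan is to derive this as a direct corollary of Theorem \ref{thm1} by combining the harmonic-mean stability criterion with the classical spherical isoperimetric inequality, as sketched in the paragraph preceding the statement. Since Theorem \ref{thm1} gives stability whenever $\tfrac{2\lambda\mu}{\lambda+\mu}>2$, equivalently $\tfrac{1}{\lambda}+\tfrac{1}{\mu}\leq 1$ (with a limiting argument for equality), the task reduces to obtaining area-based upper bounds for $1/\lambda$ and $1/\mu$, where $\lambda,\mu$ are the first Dirichlet eigenvalues of $\gaussp_L(\Sigma)$ and $\gaussp_R(\Sigma)$ on $\mathbb{S}^2$.

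For those bounds I invoke the Faber--Krahn type estimate for spherical domains used in \cite{BDC2}, namely
\[
\lambda_1(D)\ \geq\ \frac{2(4\pi-|D|)}{|D|}\ =\ 2\Bigl(\tfrac{1}{a}-1\Bigr),\qquad a=\tfrac{|D|}{4\pi}.
\]
Applied to $D=\gaussp_L(\Sigma)$ and $D=\gaussp_R(\Sigma)$, with proportionate areas $a$ and $b$, this gives $\tfrac{1}{\lambda}\leq \tfrac{a}{2(1-a)}$ and $\tfrac{1}{\mu}\leq \tfrac{b}{2(1-b)}$, so a sufficient condition for stability is
\[
\frac{a}{2(1-a)}+\frac{b}{2(1-b)}\ \leq\ 1.
\]

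The remaining step is routine algebra: clearing denominators and simplifying, the displayed inequality is equivalent to $b(3-4a)\leq 2-3a$. The assumption $a\leq 2/3$ is exactly what forces the right-hand side to be nonnegative (and is necessary for a nonnegative solution $b$ to exist at all); under this assumption one automatically has $a<3/4$, so $3-4a>0$ and one may divide to obtain the claimed bound $b\leq (2-3a)/(3-4a)$.

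There is essentially no serious obstacle; the only subtlety worth a remark is that the areas $|\gaussp_L(\Sigma)|$ and $|\gaussp_R(\Sigma)|$ are to be counted without multiplicity, since it is the \emph{image} domain on $\mathbb{S}^2$ to which the spherical isoperimetric inequality and the Dirichlet eigenvalue refer. This is consistent with the way inequality \eqref{inegalite2} on $U$ is derived in the proof of Theorem \ref{thm1} from the eigenvalue inequality \eqref{inegalite} on the Gauss image, so the two ingredients splice together without further hypotheses on the topology of $\Sigma$.
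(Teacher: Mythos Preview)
Your proposal is correct and follows essentially the same route as the paper: both combine Theorem~\ref{thm1} (in the equivalent form $\tfrac{1}{\lambda}+\tfrac{1}{\mu}\leq 1$) with the isoperimetric lower bound $\lambda_1\geq 2(\tfrac{1}{a}-1)$ from \cite{BDC2}, and then reduce $\tfrac{a}{2(1-a)}+\tfrac{b}{2(1-b)}\leq 1$ algebraically to $b(3-4a)\leq 2-3a$. Your added remarks on the equality case and on areas being counted without multiplicity are consistent with the paper's setup and do not alter the argument.
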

  The    set  $\{(a,b) :  0 \leq  b \leq \frac{2-3a}{3-4a}   \  0\leq a,b \leq  \frac{2}{3} \}$ is  a  domain  in the square $[0,1]\times[0,1]$  bounded  by the edges $x=0$, $y=0$ and 
 the  equilateral hyperbola passing through the points $(2/3,0),(0,2/3), (1/ 2, 1/2    )$  which corresponds to domain 3 in Figure \ref{fig} and domain 1 in Figure \ref{fig4}. \\
 The lowerbound in inequality \eqref{iso} is not optimal. Using Sato's second approximation in \cite{S}, we obtain a larger domain of stability bounded by the equilateral hyperbola
 passing  through the points $(.737,0),(0,.737), (1/ 2, 1/2    )$ (illustrated by  domain II of  Figure \ref{fig4}). In a similar fashion the stability domain of      Corollary \ref{cor1}
 can be enlarged.
 
\begin{figure}[h!]\label{fig4}
\begin{center}
\includegraphics[scale=0.4]{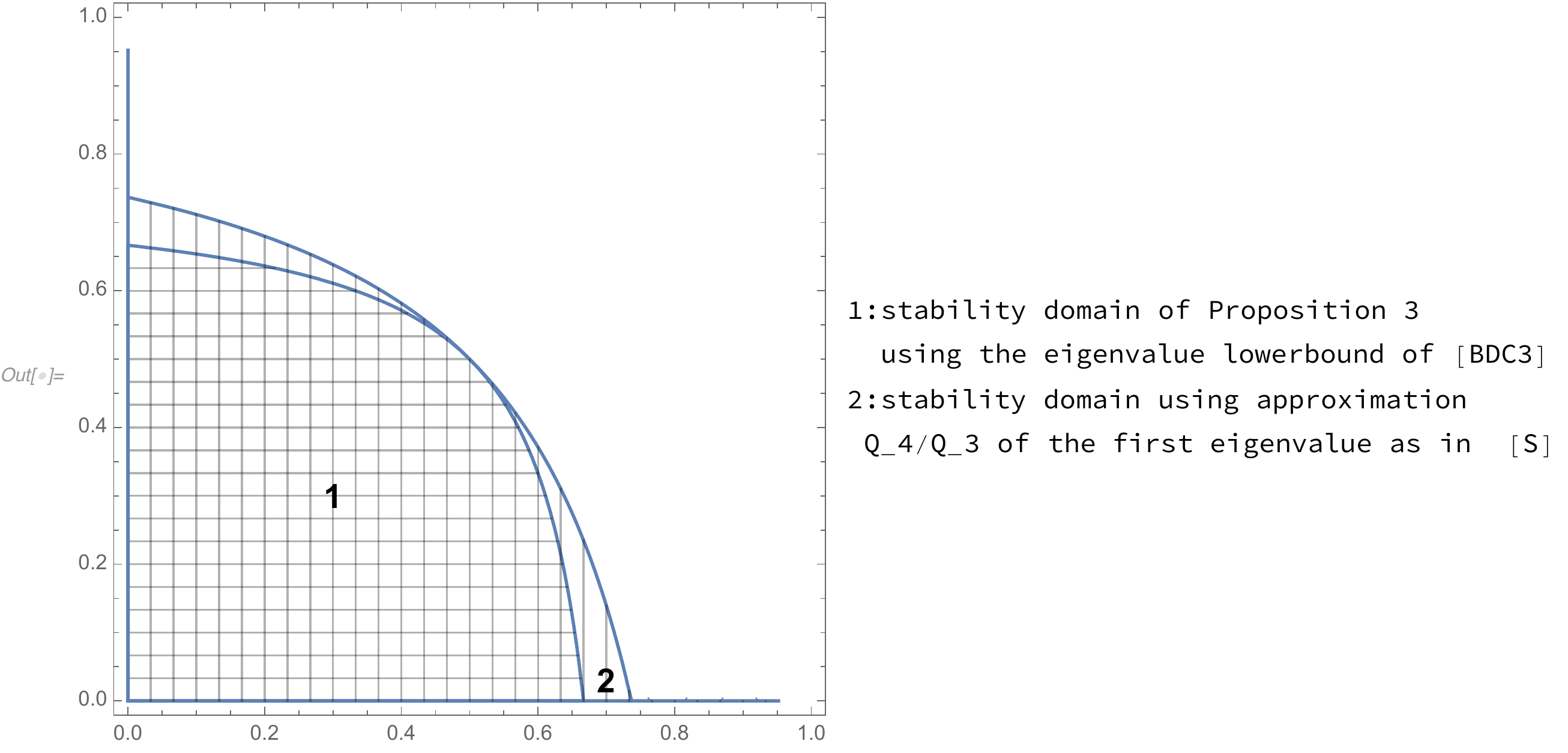} 
\caption{$ \Sigma \mapsto \left(\frac{ |\gaussp_L\left(\Sigma\right)|}{2\pi},\frac{ |\gaussp_R\left(\Sigma\right)|}{2\pi} \right)$  }
\end{center}
\end{figure} 
\newpage
  \subsection{\normalfont Stability domain for  minimal surfaces with flat normal bundle}

 Note from  \eqref{coeff} and \eqref{courbure}     that  $|\gaussp_{L,x}|^2 $ and  $|\gaussp_{R,x} |^2 $    are identical  iff  $\delta = K^N=0$. The normal bundle is then  flat
   which means $B_{xx}$ and $B_{xy}$ are colinear from the  expression of $\delta$ in equation \eqref{coeff}. One deduces that  the surface  
lies  in a hyperplane (see for example \cite{A}) Hence  minimal surfaces have a flat normal bundle iff they lie in $\mathbb{R}^3$.\\ 
Furthermore  the Weierstrass representation  of such  minimal surface  can be chosen of the form $X = (e+\bar f, g + \bar g)$ and in the coordinate system 
of Section \ref{Weier} we see that the first eigenvalues  $\lambda$ an $\mu$ of $\gaussp_L$ and $\gaussp_R$ are identical.
  Stability is  then obtained if the  first eigenvalue $\lambda$  of the right or left spherical domain is larger than 2. 
The corresponding domain of stability   is the diagonal of the square represented by the domain 2 of Figure \ref{fig} 
 \subsubsection{\normalfont  Unstability  } 
If  the surface lies in a hyperplane then there is a constant normal vector field $\zeta$.  Then plug  $\xi= f\zeta$ into LHS of \ref{stableNormal} and using  equality \ref{energie} we obtain  :
\begin{equation}\label{stable1}
\int_U |\nabla f |_\Sigma^2   da_\Sigma   -\int_U  \sum_{i,j= x,y} \frac{1}{E^2}  \langle  B_{ij},\zeta \rangle^2   f^2 da_\Sigma . 
\end{equation}
 Then  replace  in expression \eqref{stable1} the normal field   $\zeta $ by     the normal fields $e^{\gaussp t}\zeta =( \cos t + \gaussp \sin t)\zeta $  and average wrt  $  \ t\in [0,2\pi]$. The second term  of   \eqref{stable} becomes
 $$ \int_\Sigma \frac{1}{E^2}( |B_{xx}|^2 + |B_{xy}|^2)f^2da_\Sigma.  $$
 Choose $f\in C_0(U)$ such that  
 $$ \int |\nabla f|^2 ds = \frac{\lambda}{ 2}\int  \frac{1}{E^2}(   |B_{xx}|^2+     |B_{xy}|^2       ) f^2ds .$$
 ( the left and right eigenvalues are equal ($\lambda=\mu$) hence $ \frac{2\lambda\mu}{\lambda+\mu} =\lambda$ in \eqref{stable}.\\
Consequently \eqref{stable1}  is negative if $\lambda<2$.
Hence 
\begin{cor} A   minimal  surface of $\mathbb{R}^4$  with flat normal bundle is stable if  the first eigenvalue $\lambda$  of  the left (or right) spherical domain $\gaussp_L (\Sigma)$   satisfies  $\lambda >2$ 
  and unstable if  $\lambda <2$.

\end{cor}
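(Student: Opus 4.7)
The plan is to handle the two implications separately: the stability direction follows from Theorem \ref{thm1}, and the instability direction uses a direct destabilizing test section that reduces the problem to the classical stability inequality for minimal surfaces in $\mathbb{R}^3$.

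For stability, flatness of $N\Sigma$ means $K^N = 0$, and from \eqref{coeff} together with \eqref{courbure} one has $\delta = 0$ and hence $|\gaussp_{L,x}|^2 = |\gaussp_{R,x}|^2$ pointwise. Consequently $\gaussp_L$ and $\gaussp_R$ induce the same pulled-back spherical pseudo-metric on $U$, so the Rayleigh quotients attached to their spherical images coincide, forcing $\lambda = \mu$. The harmonic mean $\frac{2\lambda\mu}{\lambda+\mu}$ then reduces to $\lambda$, and the hypothesis $\lambda > 2$ combined with Theorem \ref{thm1} yields stability.

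For instability, a minimal surface with flat normal bundle lies in a hyperplane $\mathbb{R}^3 \subset \mathbb{R}^4$; let $\eta$ be the unit normal to $\Sigma$ within this $\mathbb{R}^3$. Since $\mathbb{R}^3$ is totally geodesic in $\mathbb{R}^4$, the ambient derivative $d\eta$ lies in $T\mathbb{R}^3$, and $\langle d\eta,\eta\rangle = 0$ then forces $\nabla^N \eta = 0$. Inserting $\xi = f\eta$ into \eqref{stableNormal} gives $|\nabla^N \xi|^2 = |\nabla f|^2$, while the $B_{ij}$ are all parallel to $\eta$ in $\mathbb{R}^3$, so $\sum_{i,j}\langle B_{ij},\eta\rangle^2 = \sum_{i,j}|B_{ij}|^2 = |B|^2 = -2E^2 K^T$. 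The stability inequality then reduces to the classical $\mathbb{R}^3$ condition
\begin{equation*}
\int_U |\nabla f|^2_\Sigma \, da_\Sigma + 2\int_U K^T f^2 \, da_\Sigma \;\geq\; 0.
\end{equation*}

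Now transplant a first Dirichlet eigenfunction $\phi$ of $\gaussp_L(\Sigma)$ to $U$ by setting $f := \phi \circ \gaussp_L$. Two-dimensional conformal invariance of the Dirichlet energy combined with $\gaussp_L^* da_{\mathbb{S}^2} = -K^T da_\Sigma$ (valid since $K^N = 0$) gives $\int_U |\nabla f|^2_\Sigma \, da_\Sigma = \lambda \int_U (-K^T) f^2 \, da_\Sigma$, so the displayed integral becomes $(\lambda - 2)\int_U (-K^T) f^2 \, da_\Sigma$, which is strictly negative whenever $\lambda < 2$ and $\Sigma$ is non-planar, proving instability. The main technical obstacle is the transplantation when $\gaussp_L$ is not globally injective, handled exactly as in \cite{BDC2} by the preimage-summation trick $\bar f(x) := \sum_{\gaussp_L(y) = x} f(y)$ already invoked in the proof of Theorem \ref{thm1}.
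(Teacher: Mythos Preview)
Your overall strategy matches the paper's: stability via Theorem~\ref{thm1} once $\lambda=\mu$, and instability by testing with a section of the form $f\cdot(\text{parallel unit normal})$. Your instability computation is in fact cleaner than the paper's, since you plug in $\eta$ (the $\mathbb{R}^3$-normal) directly, whereas the paper rotates the constant normal $\zeta$ by $e^{\gaussp t}$ and averages in $t$; both routes arrive at the same reduced inequality $\int|\nabla f|^2+2\int K^T f^2$.

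There are, however, two gaps. First, your argument for $\lambda=\mu$ is incomplete: equality of the pulled-back pseudo-metrics $\gaussp_L^{\,*}ds^2_{\mathbb{S}^2}=\gaussp_R^{\,*}ds^2_{\mathbb{S}^2}$ on $U$ does not by itself say anything about the first Dirichlet eigenvalues of the \emph{image domains} $\gaussp_L(\Sigma)$ and $\gaussp_R(\Sigma)$ in $\mathbb{S}^2$, which is what $\lambda$ and $\mu$ are. The paper closes this by observing (just before the Corollary) that in a suitable Weierstrass representation one has $g_R=-g_L$, so $\gaussp_R(\Sigma)$ is the image of $\gaussp_L(\Sigma)$ under an isometry of $\mathbb{S}^2$, whence $\lambda=\mu$. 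Your argument could be repaired by noting that two conformal maps into $\mathbb{S}^2$ with identical conformal factors differ by an ambient isometry, but you have to say this. Second, and more seriously, the transplant $f=\phi\circ\gaussp_L$ need not vanish on $\partial\Sigma$: one only has $\partial(\gaussp_L(\Sigma))\subset\gaussp_L(\partial\Sigma)$, not the reverse inclusion, so $\gaussp_L(\partial\Sigma)$ may meet the interior of $D$ where $\phi\neq 0$. The preimage-summation trick $\bar f(x)=\sum_{\gaussp_L(y)=x}f(y)$ from \cite{BDC2} does \emph{not} fix this: it pushes functions from $U$ down to $D$ and is precisely what yields the lower bound \eqref{inegalite2} (i.e.\ $\inf_U \geq \lambda$), whereas for instability you need the opposite direction ($\inf_U \leq \lambda$). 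The paper is admittedly terse at this same step, but your explicit invocation of the summation trick here is the wrong tool.
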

 \subsubsection{\normalfont  An example } 
 Following the notations defined in  Section \ref{Weier}, define 
  on the slab $U= \{z \in \mathbb{C} :  0 \leq \Im(z) < 2\pi \}$    a minimal immersion $X: U \longrightarrow \mathbb{R}^4$  of Weierstrass representation:
$ e = e^{-z}, f= e^z, g = z = h $ where $X(z) = (e(z)+\bar f(z), g(z) +\bar g(z) )=   2 ( \cosh x e^{iy} +xj) $.\\
From \eqref{r}  $g_L= -g_R$ and 

$X_x =  e' + \bar f' + (g'+ \bar g')j = 2 ( \sinh x e^{iy} +j) ,   X_y =  i ( e' - \bar f'+(g'- \bar g')j)=  2 i \cosh x e^{iy}  $ 
$$\gaussp_L = \frac{1}{E} X_y\bar X_x =  \frac{i}{E} ( |e^z|^2 - |e^{-z}|^2 + 2(e^{-z} +\overline{e^{z}} )j) $$
$$\gaussp_R = \frac{1}{E} X_y\bar X_x =  \frac{i}{E} ( |e^z|^2 - |e^{-z}|^2 + 2(e^{z} +\overline{e^{-z}} )j) $$
$$\gaussp_R -\gaussp_L = \alpha k.$$
Hence
$$  \langle \gaussp_L -\gaussp_R, X \rangle = 0.$$
The catenoid $X(U)$ is contained in $span( 1,I,J)$ and $\xi=  \gaussp_L -\gaussp_R $  is a   normal section of the normal bundle of $X(U)$.
 \section{\normalfont  The area of  a holomorphic curve in $\mathbb{S}^2 \times \mathbb{S}^2$  } 
K\"ahlerian geometry  on  $\mathbb{S}^2 \times \mathbb{S}^2$  provided with the product of complex structures on each   $\mathbb{S}^2$   shows that 
 the area of a holomorphic curve in $\mathbb{S}^2 \times \mathbb{S}^2$  
is the sum of the projected area on each sphere- counted with multiplicity (see remark \ref{remarkone}). More precisely 
\begin{lem} \label{kaehler}
Let $G$ be a holomorphic curve in   $\mathbb{S}^2 \times \mathbb{S}^2$  and let $\pi_i, i = 1, 2$ be the projections  maps of $G$ on each sphere, 
then 
$$ | G| = |\pi_1(G)|_{\pi_1^*(\sd)}+   |\pi_2(G) |_{\pi_2^*(\sd) }$$
where  $ |\pi_i(G)|_{\pi_i^*(\sd)}$ denotes the spherical area on $G$   pulled-back  by $\pi_i$, $i =1,2$.
  \end{lem}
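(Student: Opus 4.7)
The plan is to invoke the Kähler (Wirtinger) identity for holomorphic submanifolds of $\sd \times \sd$. Equip each factor $\sd$ with its standard complex structure $J_i$ and area form $\omega_i$; these are the Kähler forms of the round metrics. The product $\sd \times \sd$ is then Kähler with complex structure $J=(J_1,J_2)$ and Kähler form
\begin{equation*}
\omega \;=\; \pi_1^{*}\omega_1 \,+\, \pi_2^{*}\omega_2.
\end{equation*}
I would first make explicit that a holomorphic curve $G \subset \sd \times \sd$ means one whose tangent space at every regular point is invariant under $J$, so that $G$ is a $J$-holomorphic submanifold of real dimension two.

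Next I would apply Wirtinger's equality: for any $J$-holomorphic real surface $G$ inside a Kähler manifold, the restriction $\omega|_G$ coincides with the induced Riemannian area form $da_G$. (The Wirtinger inequality $\omega|_V \le da_V$ for an oriented 2-plane $V$ becomes an equality exactly when $V$ is complex, and this propagates to the integrated form.) Integrating, and using the decomposition of $\omega$ displayed above, gives at once
\begin{equation*}
|G| \;=\; \int_G da_G \;=\; \int_G \omega \;=\; \int_G \pi_1^{*}\omega_1 \;+\; \int_G \pi_2^{*}\omega_2.
\end{equation*}

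Finally I would identify each term on the right-hand side with the pulled-back spherical area appearing in the lemma. By definition of pullback, $\int_G \pi_i^{*}\omega_i$ equals the integral over $G$ of the 2-form obtained by pulling back the area form of $\sd$; this is exactly $|\pi_i(G)|_{\pi_i^{*}(\sd)}$, the area of the image counted with the multiplicity with which $\pi_i:G\to\sd$ covers it. Putting the two lines together yields the stated equality.

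The only subtle point, which is really the crux, is justifying Wirtinger equality cleanly along the possibly non-embedded curve $G$ (branch points, self-intersections). I would handle this by noting that the set of singular points of $G$ (branch points of $\pi_i$ or of the curve itself) is discrete and hence of measure zero, so that integration over the smooth locus suffices. On that smooth locus the identification $\omega|_G = da_G$ is pointwise: at any regular point, the tangent plane $T_pG$ is a complex line in a Kähler surface, and Wirtinger gives $\omega(u,Ju)=|u|^2$ for any $u\in T_pG$. No other topological or analytic ingredient is needed, so the proof is quite short once the Kähler viewpoint is set up.
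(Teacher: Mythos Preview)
Your proof is correct and follows essentially the same approach as the paper: decompose the K\"ahler form of $\sd\times\sd$ as $\omega=\pi_1^*\omega_1+\pi_2^*\omega_2$, use that on a holomorphic curve the K\"ahler form equals the induced area form, and integrate. The only cosmetic difference is that the paper verifies the equality $\omega|_G=da_G$ by a direct computation in isothermal coordinates (writing $\omega(X_x,X_y)=\langle iX_x,X_y\rangle$) rather than citing Wirtinger by name; your treatment of branch points is also slightly more explicit than the paper's.
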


\begin{proof}
The K\"ahler form  of $ \mathbb{S}^2\times  \mathbb{S}^2$  ( provided with  the metric which is the  direct product  of round sphere metrics)  is the sum of the Kahler form on each factor.
Choose  local isothermal coordinates  of the curve $G$; then the K\"ahler-area  form -  $\omega$ in the ambient space is equal to :
$$\omega(X_x,X_y) = \langle  iX_x, X_y     \rangle   =  \langle  iX^1_x+iX^2_x, X^1_y  +   X^2_y  \rangle.   $$
Hence
$$\omega(X_x,X_y) =   \langle  iX^1_x , X^1_y    \rangle  +    \langle  iX^2_x , X^2_y    \rangle=  \omega_{\mathbb{S}^2}(X^1_x,X^1_y) +   \omega_{\mathbb{S}^2}(X^2_x,X^2_y).  $$
And
$$|G| = \int_G \omega = \int_G \pi_1^*\omega_{\mathbb{S}^2} + \int_G \pi_2^*\omega_{\mathbb{S}^2}$$
\end{proof}
Comparing the projected area on each sphere  with the projected  area counted with multiplicity, we have 
\begin{cor}\label{aires} Let $\Sigma$ be a  minimal surface in $\mathbb{R}^4$ then 
$$|g( \Sigma)| \geq  |\gaussp_L( \Sigma)|  +  |\gaussp_L( \Sigma)| $$
where $g = (\gaussp_L,\gaussp_R)$ is the Gauss map of $\Sigma.$
\end{cor}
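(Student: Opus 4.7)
The goal is to deduce the corollary from Lemma \ref{kaehler} applied to the Gauss image $g(\Sigma) \subset \mathbb{S}^2 \times \mathbb{S}^2$. The two ingredients I need are (i) that $g(\Sigma)$, viewed as a subset of $\mathbb{S}^2 \times \mathbb{S}^2$, is a (pseudo-)holomorphic curve for the product complex structure, so that the lemma applies, and (ii) that the pulled-back areas $|\pi_i(G)|_{\pi_i^*(\sd)}$ that appear on the right-hand side of the lemma are always at least as large as the spherical areas $|\gaussp_L(\Sigma)|$ and $|\gaussp_R(\Sigma)|$ that appear in the corollary, since the former count multiplicity and the latter do not.

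My first step is to verify the holomorphy of $g$. Here I invoke part (2) of Proposition \ref{propSurG}: for a minimal surface in $\mathbb{R}^4$, the component maps $\gaussp_L$ and $\gaussp_R$ are anti-holomorphic with respect to their own almost complex structures on $T\Sigma$ (and these two structures agree on $T\Sigma$ by part (1) of the same proposition). By reversing the orientation on each $\mathbb{S}^2$ factor if necessary (which changes neither the image nor any spherical area), the product map $g = (\gaussp_L,\gaussp_R)$ becomes a holomorphic map from $\Sigma$ (with the conformal structure induced from $\mathbb{R}^4$) into $\mathbb{S}^2 \times \mathbb{S}^2$ with the product complex structure. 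Thus Lemma \ref{kaehler} applies and yields
\[
|g(\Sigma)| \;=\; |\pi_1(g(\Sigma))|_{\pi_1^*(\sd)} + |\pi_2(g(\Sigma))|_{\pi_2^*(\sd)}.
\]

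The second step is the multiplicity comparison. The pulled-back area $|\pi_i(g(\Sigma))|_{\pi_i^*(\sd)}$ is by definition $\int_\Sigma \gaussp_*^* d a_{\sd}$, i.e.\ the area of the image of the $i$-th Gauss map counted with multiplicity; equivalently, using the co-area formula,
\[
|\pi_i(g(\Sigma))|_{\pi_i^*(\sd)} \;=\; \int_{\sd} \#\{p\in\Sigma: \pi_i\circ g(p)=q\}\,d a_{\sd}(q) \;\geq\; |\gaussp_{L/R}(\Sigma)|,
\]
the right-hand side being the $\sd$-measure of the image without multiplicity. Summing the two inequalities for $i=1,2$ and combining with the lemma gives $|g(\Sigma)| \geq |\gaussp_L(\Sigma)| + |\gaussp_R(\Sigma)|$, which is the claim (with the evident typo $|\gaussp_L|$ for $|\gaussp_R|$ in the second summand corrected).

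I do not expect any serious obstacle; the only point requiring a little care is the orientation convention that turns the anti-holomorphic maps of Proposition \ref{propSurG} into a genuinely holomorphic $g$ for the product structure on $\sd\times\sd$, so that Lemma \ref{kaehler} is applicable rather than giving a signed identity. Once that is fixed, the rest is a direct application of the lemma plus the trivial multiplicity inequality.
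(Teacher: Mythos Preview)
Your proposal is correct and is precisely the paper's approach: the paper's own proof of this corollary is the single clause ``Comparing the projected area on each sphere with the projected area counted with multiplicity'', and you have simply spelled that out, invoking Lemma~\ref{kaehler} for the holomorphic Gauss image and then the trivial inequality (area with multiplicity) $\geq$ (area of the image). One small point of bookkeeping: the pulled-back areas in Lemma~\ref{kaehler} are integrals over the image curve $g(\Sigma)\subset\sd\times\sd$, not over $\Sigma$, so your coarea count should read $\#\{q\in g(\Sigma):\pi_i(q)=p\}$ rather than $\#\{p\in\Sigma:\pi_i\circ g(p)=q\}$; the desired inequality follows in exactly the same way.
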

 Hence we see that an upperbound on the sum $ |\gaussp_L( \Sigma)|  +  |\gaussp_L( \Sigma)| $ as in Corollary \ref{cor1} 
  does not impose an upperbound on $|g( \Sigma)| $
 as in Theorem \ref{thm4}. 
Furthermore there is no a priori upperbound for $ |g(\Sigma)|$.
    \begin{prop}\label{nobound}
There is a family of minimal surfaces $\Sigma_n$ such that $\lim |g(\Sigma_n)| = +\infty$.
\end{prop}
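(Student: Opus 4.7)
The plan is to exhibit an explicit family using truncated complex algebraic graphs of increasing degree. Since every holomorphic curve in $\cdeux \cong \rquatre$ is minimal, these furnish concrete minimal surfaces whose Gauss maps are completely explicit. Concretely, for each $n \geq 2$ take
\[
\Sigma_n \;:=\; \{(z, z^n) \in \cdeux : |z| \leq 1\}.
\]

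For any complex curve the tangent plane at each point is a complex line, so the left Gauss map $\gaussp_L$ is constant and its contribution to $|g(\Sigma_n)|$ vanishes. Under $\sd \cong \pcun$ the right Gauss map reads $\gaussp_R(z) \leftrightarrow [1:nz^{n-1}]$, so that $\gaussp_R|_{\Sigma_n}$ is a $(n{-}1)$-sheeted branched cover onto $\{|w|\leq n\} \subset \pcun$. By Lemma \ref{kaehler} applied to $g=(\gaussp_L,\gaussp_R)$, the (multiplicity-counted) spherical area $|g(\Sigma_n)|$ coincides with the multiplicity-counted area of $\gaussp_R(\Sigma_n)$.

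To see that this area diverges there are two natural routes. The direct one uses Proposition \ref{propSurG}(5): since $|\gaussp_{L,x}|=0$, one has $|\gaussp_{R,x}|^2 = -2E K^T$, so
\[
|\gaussp_R(\Sigma_n)|_{\text{mult}} \;=\; 2\int_{\Sigma_n}|K^T|\,da_\Sigma.
\]
With induced conformal factor $E_n(z) = 1+n^2|z|^{2(n-1)}$, the substitution $u := n^2|z|^{2(n-1)}$ turns this into $4\pi(n-1)\int_0^{n^2}(1+u)^{-2}\,du = 4\pi(n-1)\cdot n^2/(1+n^2)$, which grows linearly in $n$.

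Alternatively one bypasses the integral: the polynomial $z \mapsto nz^{n-1}$ maps the closed unit disk onto $\{|w|\leq n\} \subset \ci$ as a branched $(n{-}1)$-fold cover, so each fixed spherical cap in $\pcun$ is covered $n-1$ times by $\gaussp_R$ once $n$ is large enough. This forces $|\gaussp_R(\Sigma_n)|_{\text{mult}} \geq (n-1)(4\pi - \varepsilon_n)$ with $\varepsilon_n \to 0$, and hence $|g(\Sigma_n)| \to +\infty$. The only real obstacle is bookkeeping: one has to confirm that the convention for $|g(\Sigma)|$ in this statement is the degree-weighted (multiplicity-counted) area --- which is precisely the interpretation consistent with Remark \ref{remarkone}, the product-K\"ahler identity of Lemma \ref{kaehler}, and the absence of any a priori upper bound.
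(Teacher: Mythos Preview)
Your argument has a genuine gap: the convention for $|g(\Sigma)|$ is the area \emph{without} multiplicity, and your examples are complex curves, for which $|g(\Sigma)|$ is uniformly bounded.

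Remark~\ref{remarkone} states explicitly that ``the spherical area $|g(\Sigma)|$ \ldots\ [is] counted without multiplicity''; you have read it as saying the opposite. The paper also records, just before Theorem~\ref{thm4}, that complex curves satisfy $|\gaussp_L(\Sigma)|=0$ and $|g(\Sigma)|\le 2\pi$. Your $\Sigma_n=\{(z,z^n):|z|\le 1\}$ are complex curves, so $\gaussp_L$ is constant and $g(\Sigma_n)\subset\{\mathrm{pt}\}\times\sd$; hence $|g(\Sigma_n)|$ is bounded by the area of a single sphere factor, no matter how large $n$ is. What your computation correctly shows is that the \emph{multiplicity-weighted} area $\int_{\Sigma_n} g^*\omega$ diverges, but this is not the quantity in the proposition. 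Invoking ``the absence of any a priori upper bound'' as evidence for the multiplicity convention is circular, since that absence is precisely what Proposition~\ref{nobound} asserts.

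The missing idea is that one must force the Gauss map $g=(\gaussp_L,\gaussp_R)$ itself to be injective while its image area grows. The paper does this by taking Weierstrass data with $g_L=z^p$, $g_R=-z^q$ for coprime $p,q$ with $p+q=n$: coprimality makes $(g_L,g_R)$ injective on $\mathbb{C}$, so the image area (without multiplicity) equals the K\"ahler area computed by Lemma~\ref{kaehler}, which is $2\pi p+2\pi q=2\pi n$. The point is that neither projection alone is injective, but the pair is; this is exactly what fails for complex curves, where one projection is constant and the other must absorb all the multiplicity.
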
	
\begin{proof}
Consider the minimal surface $\Sigma_n$ conformal to $\mathbb{C}$ and given by the Weierstrass representation ( see  expression \ref{Weier} in next Section)
	$$e'=z^n\  \ \ f'=1\  \ \ g'=z^p\  \ \ h'=-z^{q}$$
	with $p+q=n$, $(p,q)=1$.\\
The  holomorphic Gauss maps - defined as in equations \eqref{r} , are $g_L=z^p$ and $g_R=-z^{q}$. 
Let us show that the map $(g_L,g_R)$ is injective.
Let $(z_1,z_2)$ be a double point of $(g_L,g_R)$. Then $z_1^p=z_2^p$ and  $z_1^q=z_2^q$. Since $(p,q)=1$, this implies that $z_1=z_2$. 
Thus  from Lemma \ref{kaehler}, $|g(\Sigma)|$ is equal to $2\pi p + 2\pi q =2\pi n$ .
\end{proof}
Still,  the area  of the Gauss map image of  complex curves is  bounded  above by $2\pi$.
\section{\normalfont Proof of Theorem \ref{thm4}}

The set of minimal surfaces in $\mathbb{R}^4$ are locally easy to describe.  The data defining a minimal  immersion of a disk is a quadruple 
of holomorphic function $(e,f,g,h)$  on $U$ 
such that 
\begin{equation} \label{Weier2}  
e'f'+g'h'=0. 
\end{equation}

The minimal immersion is  then given by the map 
\begin{equation}\label{Weier}
X: \left(
\begin{array}{ll}
 U &\longrightarrow \quat \\
    z &\mapsto    e(z)+\bar f(z) +( g(z)+\bar h(z))J
    \end{array}
\right)
\end{equation}
the coordinates  ({\it Weierstrass coordinates})  of the immersion are automatically harmonic.
In this algebraic setting, the conformality  condition of $X$  - given by equation \eqref{Weier2} - is best understood in the complexified ambient space.\\
We denote the complexified quaternions by: $\quatc = \quat \otimes_\mathbb{R}\mathbb{C}$.
The Euclidean scalar product  $\langle \cdot, \cdot \rangle$ on $\quat$ extends to $\quatc$   either as  the complexified scalar product 
 $\langle \cdot, \cdot \rangle_{\mathbb{C}}$ or  as  the hermitian metric  $( \cdot, \cdot )_{\mathbb{C}}$.\\
The immersion  $X$ is conformal  wrt  $z\in U $ iff  the complexified vector  $(X_x-iX_y)(z)$ is a null vector i.e. iff $\langle X_x-iX_y, X_x-iX_y\rangle_{\mathbb{C}} =0$ . The tangent Gauss map 
can be  identified to 
\begin{equation}
G': 
\left( 
\begin{array}{ll}
 U & \longrightarrow  Q'_2 \subset P(\quatc)\\
 z & \mapsto \left[X_x(z)-iX_y(z) \right]
 \end{array}
\right)
\end{equation}
where $Q'_2$ is  the   complex 2-dimensional quadric of the complex 3-dimensional projective space  $P(\quatc)$  
- identified to  the Grassmanian $G^+(2,4)$ -  which  is defined  as the zeroes of  the  following quadratic form 
$$Q'_2 := \{ [Z ]\in P(\quatc) :    q'_2(Z) :=  z_1^2+z_2^2 + z_3^2+z_4^2 =0\ {\rm for} \   Z = z_1 +z_2I+z_3J+z_4K \} .$$
If we plug the holomorphic coordinates of \eqref{Weier} into $G'$, we obtain  
  an equivalent Gauss map 
  \begin{equation}
G: 
\left( 
\begin{array}{ll}
 U & \longrightarrow  Q_2 \subset P(\quatc)\\
 z & \mapsto \left[e'(z),f'(z),g'(z),h'(z)\right]
 \end{array}
\right)
\end{equation}
where 
$$Q_2 := \{ [Z ]\in P(\quatc) :    q'_2(Z) :=  z_1 z_2  + z_3 z_4  =0\ {\rm for} \   Z = z_1 +z_2I+z_3J+z_4K \} .$$
(The Weierstrass coordinates whose Gauss map takes its values in  $Q'_2$ appeared in \cite{E} and  
those adapted to a parametrization of $Q_2$ appeared   in \cite{MW} ).
  The passage from this algebraic description 
of the Gauss map to the   geometric one is summarized in the following diagrams;  the  second diagram gives the expressions of the maps of the first diagram.
\begin{figure}[h!]\label{diagram}

\begin{tikzcd}
\quat  \arrow[r, leftarrow, black, "X"]&U  \arrow[r, black, "G"] \arrow[d, "\gaussp",black]   \arrow[rd, dashrightarrow, " \tilde g",black] &  Q_2  \arrow[r, hookrightarrow  , "" ] 
 \arrow[d, dashrightarrow,  "p",black]  \arrow[rd,leftrightarrow , "Segre\  map" ] &   \mathbb{P}^3 \\
 &   \mathbb{S}^2 \times  \mathbb{S}^2   \arrow[r,leftarrow," \sigma \times \sigma " ] &  \mathbb{C} \times  \mathbb{C}  \arrow[r,dashleftarrow,"" ] &  
    \mathbb{P}^1 \times  \mathbb{P}^1      
    \end{tikzcd}
\\
\begin{tikzcd}
{\scriptstyle e(z)+\bar f(z) +( g(z)+\bar h (z) )j} \arrow[r, leftarrow, black, "X"]
& {\scriptstyle z =x+i y}\arrow[r, black, "G"] \arrow[d, "\gaussp",black]   \arrow[rd,dashrightarrow, "\tilde g",black]  
& {\scriptstyle   [e'(z),f'(z),g'(z),h'(z)] }  \arrow[r, hookrightarrow  , "" ]  \arrow[d,dashrightarrow, "p",black] &  
{\scriptstyle [z_1z_3,z_2z_4,-z_2z_3,z_1z_4] } \arrow[d,leftarrow , "\sigma_e" ] \\
 &  \scriptstyle{\left(\gaussp_L\left(z\right),  \gaussp_R\left(z\right) \right) } \arrow[r,leftarrow," \sigma \times \sigma   " ] & 
  \scriptstyle{(\frac{z_1}{z_2} , \frac{z_3}{z_4}    ) }  \arrow[r,dashleftarrow, "" ] &      \scriptstyle{( [z_1,z_2],  [z_3,z_4]  ) } 
\end{tikzcd}
\caption{ Map diagram of the Gauss map in Weierstrass coordinates  }
\end{figure}
\vskip .2 in 
where :  $G(z) = X_x - i X_y $,  $\sigma: \mathbb{C} \longrightarrow  \mathbb{S}^2$ is  the  stereographic projection, 
$ \gaussp  :=(\gaussp_L,\gaussp_R) , \tilde g :=(g_L,g_R) $, $p([z_1,z_2,z_3,z_4]) = (\frac{z_1}{z_4}, \frac{z_4}{z_2})$.
The dotted arrows represent rational maps. The holomorphic Weierstrass data of the Gauss maps are then:


\begin{equation}  \label{r} 
g_L (z) = \frac{z_1}{z_2}  = \frac{e'(z)}{h'(z)} = -  \frac{g'(z)}{f'(z)} ,\quad   g_R(z) = \frac{z_3}{z_4}  =- \frac{e'(z)}{g'(z)}=  \frac{h'(z)}{f'(z)}.
\end{equation}

Moreover  $P(\quatc)$  is provided with the Fubini metric $ds^2_F$ so that the projection  \\
 $\pi: (\quatc\setminus\{0\} , ( \cdot, \cdot )_{\mathbb{C}})   
 \longrightarrow  (P(\quatc), ds^2_F)$
is a Riemannian submersion.  Thus the   unitary group $U(4)$ action on  $\quatc$  descends to   isometries  on $P(\quatc)$.

  
The quadrics  $Q_2$ and $Q'_2$ are each   provided with the  metric induced by the  ambient Fubini metric.  
 One can easily check that  $Q_2$    is isometric to   $Q'_2$   via the  group element $A\in U(4)$
defined by :
\begin{equation}
A= \left( 
\begin{array} {cc}
 A_1 &  O  \\
 O  & A_1 
 \end{array}
 \right) \ {\rm where} \ 
 A_1=  \frac{1}{\sqrt{2} }\left( 
\begin{array}{cc} 
 1 &  i  \\
 1  & -i 
 \end{array}
 \right)
\end{equation}
i.e.  : $  \forall v\in \quatc\    \quad q_2(Av) = q'_2(v)$.\\
Moreover it is clear that $Q'_2$ is stable by  $O(4,\mathbb{C})$. We then deduce that    $Q_2$ is stable  by  
the conjugate group  $A\cdot O(4,\mathbb{C})\cdot  A^{-1} $. In particular 
$Q_2$ is stabilized by  the subgroup \\
 $ A\cdot SO(4,\mathbb{R})\cdot A^{-1}  \subset SU(4) $.    
We introduce now   the  following permutation of the left and right sphere  : 

\begin{equation}\label{swich}
\pi : \left(
 \begin{array}{ll}
   \mathbb{P}^1 \times     \mathbb{P}^1  & \longrightarrow    \mathbb{P}^1 \times     \mathbb{P}^1    \\
    ( [a,b],[c,d] )  & \mapsto    ([d,c],  [a,b] ) 
 \end{array}
 \right)
\end{equation}
One checks that $\pi = \sigma_e^{-1}\circ S \circ\sigma_e$ where   $\sigma_e$ is  defined in the diagram of Figure \ref{diagram}  and $S$ is the  ambient linear map  $S$ of $\quatc$ :

 \begin{equation}
S= \left( 
\begin{array} {cc}
O &  S_1  \\
 S_2  & O 
 \end{array}
 \right), \ {\rm where} \ 
 S_1=  \left( 
\begin{array}{cc} 
 0 &  1  \\
- 1  & 0 
 \end{array}
 \right)  \ {\rm and} \ 
 S_2=   \left( 
\begin{array}{cc} 
 -1 &  0  \\
0  & 1 
 \end{array}
 \right). \end{equation}
By construction $Q_2$ is stable by the action of $S$ and by its expression one sees that  $S \in SO(4,\mathbb{R})\subset SU(4)$.
\begin{lem}\label{isotopy} There exists an isotopy $H : ([ 0,1] \longrightarrow \mathcal{J} :=U(4) \cap A\cdot  SO(4,\mathbb{C})\cdot  A^{-1})$ 
of isometries acting on $Q_2\subset P(\quatc)$   such that $H(0)= Id$ and $H(1) = S$.
\end{lem}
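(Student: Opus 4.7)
The plan is to recognize $\mathcal{J}$ as a compact connected Lie group, so that the isotopy follows automatically from path-connectedness (or, more explicitly, from the surjectivity of the exponential map).

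First I would identify $\mathcal{J}$ concretely. Since $A\in U(4)$, conjugation by $A$ is an inner automorphism of $U(4)$, and so
\begin{equation*}
\mathcal{J}\;=\;U(4)\cap A\cdot SO(4,\mathbb{C})\cdot A^{-1}\;=\;A\cdot\bigl(U(4)\cap SO(4,\mathbb{C})\bigr)\cdot A^{-1}\;=\;A\cdot SO(4,\mathbb{R})\cdot A^{-1}.
\end{equation*}
Hence $\mathcal{J}$ is conjugate inside $U(4)$ to $SO(4,\mathbb{R})$, so it is a compact connected Lie group of real dimension $6$. In particular, its exponential map $\exp:\mathfrak{j}\to\mathcal{J}$ is surjective.

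Next I would verify that $S\in\mathcal{J}$. From the text $S\in SO(4,\mathbb{R})\subset U(4)$, and $S$ preserves $Q_2$. Using $q_2(Av)=q'_2(v)$, preservation of $Q_2$ by $S$ is equivalent to the conjugate $A^{-1}SA$ preserving $Q'_2$; combined with $A^{-1}SA\in U(4)$, this places $A^{-1}SA$ in $O(4,\mathbb{C})\cap U(4)=O(4,\mathbb{R})$. A determinant check (possibly after replacing $S$ by $-S$, which gives the same projective transformation of $Q_2\subset P(\mathbb{H}_{\mathbb{C}})$) puts $A^{-1}SA$ in the identity component $SO(4,\mathbb{R})$, so $S\in\mathcal{J}$.

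Finally, since $\mathcal{J}$ is compact and connected, pick $X\in\mathfrak{j}$ with $\exp(X)=S$ and define
\begin{equation*}
H(t)\;:=\;\exp(tX),\qquad t\in[0,1].
\end{equation*}
Then $H$ is a smooth path in $\mathcal{J}$ with $H(0)=\mathrm{Id}$ and $H(1)=S$, and every $H(t)$ acts by isometries on $Q_2$ because the whole of $\mathcal{J}$ does (being a subgroup of $U(4)$, which acts isometrically on $P(\mathbb{H}_{\mathbb{C}})$ with its Fubini metric, and preserving $Q_2$ by definition). The main obstacle is the second step: translating the condition $S\in A\cdot SO(4,\mathbb{C})\cdot A^{-1}$ through $A$ and confirming the sign of the determinant; once $S$ is seen to lie in the conjugated copy of $SO(4,\mathbb{R})$, the existence of $H$ is just the standard fact that a compact connected Lie group is path-connected via one-parameter subgroups.
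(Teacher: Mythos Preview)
Your approach mirrors the paper's exactly: conjugate by $A$ to identify $\mathcal{J}$ with $SO(4,\mathbb{R})$, invoke connectedness, and produce the path. The paper's proof is literally those two sentences, so the strategy is the same; your addition of the exponential map is cosmetic.

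There is, however, a real gap at your second step, and your proposed fix does not close it. A direct check gives $q_2(Sz)=-q_2(z)$ (since $S$ sends $(w_1,w_2,w_3,w_4)$ to $(w_4,-w_3,-w_1,w_2)$), so $A^{-1}SA$ does not preserve $q_2'$ and hence is not in $O(4,\mathbb{C})$ at all. Replacing $S$ by $-S$ changes nothing: one still has $q_2(-Sz)=-q_2(z)$, and $\det(-S)=\det S$ in dimension~$4$. The scalar that repairs the sign on $q_2$ is $i$, and indeed $A^{-1}(iS)A$ is real orthogonal; but $\det(iS)=i^{4}\det S=\det S=-1$ (a direct computation gives $\det S=-1$, contrary to the paper's assertion that $S\in SO(4,\mathbb{R})$), so $A^{-1}(iS)A$ lands in $O(4,\mathbb{R})\setminus SO(4,\mathbb{R})$. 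Hence no scalar multiple of $S$ lies in $\mathcal{J}=A\cdot SO(4,\mathbb{R})\cdot A^{-1}$. Geometrically this is transparent: under the Segre identification $Q_2\cong\mathbb{P}^1\times\mathbb{P}^1$, the map $S$ swaps the two factors (composed with an $SO(3)$ rotation on one of them), and the factor-swap is not in the identity component of the holomorphic isometry group $(SO(3)\times SO(3))\rtimes\mathbb{Z}/2$ of $Q_2$. So you were right to flag step~2 as the obstacle; the determinant check genuinely fails, and both your argument and the paper's share this gap.
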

\begin{proof}
   $Id$ and $S$ belong to $\mathcal{J}$ which is path-connected.
Indeed, its conjugate  by $ A\in U(4)$ equals
$A^{-1} \mathcal{J} A= U(4) \cap SO(4,\mathbb{C}) = SO(4,\mathbb{R}) $ which is  path-connected.
\end{proof}

\subsection{\normalfont Deformations by associate minimal surfaces}
We start with  a minimal surface  whose Gauss map  in  Weierstrass coordinates -following the notations of Section \ref{Weier} - are  of the form 
\begin{equation}
G: \left( 
 \begin{array}{ll}
U & \longrightarrow    G(U) \subset Q_2 \subset   P(\quat \otimes\mathbb{C})\\
  z  &\longrightarrow    [e'(z),f'(z),g'(z),h'(z)]
 \end{array}
 \right)
\end{equation}
NB. If the Riemann surface $U$ is not simply-connected then we replace it by  its universal cover; the image by the Gauss map of $U$ will not be affected.\\
From  Lemma \ref{isotopy} , there exists a  continuous path  $\gamma : [0,1] \longrightarrow   \mathcal{J} $ with $\gamma_0= Id$ and $ \gamma_1 = S$.\\
This  generates  a continuous  family of maps   $ G_t  := \gamma_t . G$.  Since $\gamma_t \in \mathcal{J}$  then  $\gamma_t G(U)\subset Q_2$. 
  In terms of Weierstrass coordinates, let     $ G_t  := [e_{t,1},e_{t,2},e_{t,3},e_{t,4}]$ 
  then  $e_{t,1}.e_{t,2}+ e_{t,3}e_{t,4} =0$. 
The   $G_t$ are then the  Gauss maps of a family of new minimal surfaces  - so called   associate minimal surfaces  to $\Sigma$.
\begin{equation}\label{famille}
X_t  : \left(
 \begin{array}{ll}
U & \longrightarrow   \Sigma_t \subset  \quat    \\
  z  &\mapsto     \int_z e_{1,t} + \overline{ \int_z e_{2,t}}  + ( \int_z e_{3,t} + \overline{ \int_z e_{4,t}} )J
 \end{array}
 \right)
\end{equation}
where $\Sigma_0 = \Sigma$ and  by  the defintion  of   \eqref{swich} such that  the Gauss map of $\Sigma_1$ is given by :
\begin{equation}\label{perm}
(\gaussp_{L,1}, \gaussp_{R,1}) = (\sigma_a\circ \gaussp_{R}, \gaussp_{L})  
\end{equation}
where $\sigma_a$ is the antipodal symmetry.\\
Let us describe some invariants  of this family of minimal surfaces.  
\begin{lem}\label{propAsso}
For any minimal surface  $\Sigma$ there is an associate family of   minimal surfaces  to $\{\Sigma_t\}_{t\in [0,1]} $   defined by  \eqref{famille}  
such that  the following   conditions are satisfied: \\
 $ \forall p \in U\ {\rm and} \  \forall   t\in [0,1] :$ 
\begin{enumerate} 
\item $\Sigma_t$ is  locally isometric to $\Sigma_0 = \Sigma$: 
$$  ds^2_t(p) := \lambda^2_t (p)  |dz|^2 = \lambda^2 (p)  |dz|^2  $$ 
\item The tangent curvature is invariant by deformation :  
$$    K_t^T(p) = K^T(p)  $$
\item The images of the Gauss maps have the same area :  
$$| g_t(\Sigma_t) | = | g(\Sigma) | $$
\item If  the operator  $\Delta_{\Sigma_{t_0}} - 2 K^T_{t_0 }$  is positive for some $ t_0\in [0,1]$  then  $\Delta_{\Sigma_t} - 2 K^T_t $ is positive for all $t\in[0,1]$.

\end{enumerate}
\end{lem}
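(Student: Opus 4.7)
The plan is to derive all four assertions from the single key fact that $\gamma_t \in U(4)$ acts as a Hermitian unitary transformation on the $\ci^4$-valued Weierstrass data. The crucial first step, on which the other three hinge, is to express the conformal factor $\lambda^2$ of $\Sigma$ directly in terms of the Weierstrass vector. A short computation from $X = e(z) + \bar f(z) + (g(z) + \bar h(z))J$ and the holomorphy of $e, f, g, h$ should yield
\[
\lambda^2(p) = E(p) = |e'(z)|^2 + |f'(z)|^2 + |g'(z)|^2 + |h'(z)|^2 = \|W(z)\|^2,
\]
where $W := (e', f', g', h') \in \ci^4$ and $\|\cdot\|$ denotes the standard Hermitian norm. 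Since by the definition \eqref{famille} the Weierstrass vector of $\Sigma_t$ is $W_t = \gamma_t W$, and since $\gamma_t \in U(4)$ preserves the Hermitian norm on $\ci^4$, one immediately obtains $\lambda_t^2 = \|W_t\|^2 = \|W\|^2 = \lambda^2$, proving (1).

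Once (1) is in hand, (2) follows from Gauss's Theorema Egregium: the tangent Gaussian curvature depends only on the intrinsic metric, so $K_t^T = K^T$ pointwise on $U$. For (3), I would note that $\gamma_t \in U(4)$ descends to an isometry of the Fubini metric on $P(\quatc)$ and hence of the induced metric on $Q_2 \simeq G^+(2,4)$; since $G_t = \gamma_t \circ G$ and $\gamma_t$ is a homeomorphism of $Q_2$ onto itself, the image $G_t(U) = \gamma_t(G(U))$ has the same spherical area as $G(U)$, which via the isometry $Q_2 \simeq \sd \times \sd$ yields $|g_t(\Sigma_t)| = |g(\Sigma)|$. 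For (4), by (1) and (2) the metric $\lambda_t^2 |dz|^2$ and the potential $K_t^T$ on $U$ are literally $t$-independent; therefore the operator $\Delta_{\Sigma_t} - 2 K_t^T$ is the \emph{same} differential operator on $U$ for every $t \in [0,1]$, and its positivity as a quadratic form on compactly supported test functions is trivially a $t$-invariant property.

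The main obstacle is the Weierstrass computation in Step 1: one has to be careful with the conjugations in the representation and with the interplay between the complex structure $I$ on $\quat$ and the complexification parameter $\sqrt{-1}$ used to view $W$ as an element of $\ci^4$. A secondary, minor subtlety in (3) is to ensure one is measuring image areas rather than parametrized ones, but this is automatic since $\gamma_t$ restricts to a bijection of $Q_2$.
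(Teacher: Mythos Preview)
Your proposal is correct and follows essentially the same approach as the paper's own proof: expressing the conformal factor as the Hermitian norm of the Weierstrass vector $W=(e',f',g',h')$ and invoking that $\gamma_t\in U(4)$ preserves this norm to get (1), then deducing (2), (3), (4) exactly as you indicate. Your remark that (3) concerns image (not parametrized) area and is handled because $\gamma_t$ is a bijective isometry of $Q_2$ is in fact slightly more careful than the paper's one-line justification.
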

\begin{proof}
\begin{enumerate}
\item We consider the deformation of $\Sigma$ defined in \eqref{famille}.In local isothermal coordinates $z$ , in a neighborhood of  some $p\in U$, $ds^2 = \lambda |dz|^2$.  where $\lambda =  |e'|^2+ |f'|^2 +|g'|^2+|h'|^2 = (G,G)$
where $( , )$ is the hermitian metric in $\quatc$. 
As $\gamma_t \in U(n)$, the metrics  $ ds_t^2 = \lambda_t |dz|^2 $ of $\Sigma_t$ are all locally isometric to $\Sigma$  since
$$\lambda_t  = (G_t,G_t) = |e_{1,t}|^2 + |e_{2,t}|^2 + |e_{1,2}|^2 + |e_{3,t}|^2  +  |e_{4,t}|^2=   (\gamma_tG,\gamma_tG) =(G,G).$$
\item The metric is invariant, so is the tangent  curvature.\\
\item   $|g_t(\Sigma)| = |\gamma_tg(\Sigma)| = |g(\Sigma)|$ \\
\item  If  for some $t_0$:
\begin{equation}\label{fin}
 \int_U|\nabla \phi|_{\Sigma_{t_0}} ^2da_{\Sigma_{t_0}}  \geq -2\int_U  K_{t_0}^T \phi^2 da_{\Sigma_{t_0}}  \ \forall \phi \in C_0(U),
 \end{equation}
 then, as  the  $\Sigma_t$ are all isometric,  $|\nabla \phi|_{\Sigma_{t_0}} ^2 =|\nabla \phi|_{\Sigma_{t}} ^2$ and $da_{\Sigma_{t_0}}=da_{\Sigma_{t}}$ and by the second point,
the  stability inequality  \eqref{fin} is true  for any $t\in[0,1]$.
\end{enumerate}
 \end{proof}
\begin{figure}[h!]\label{fig2}
\begin{center}
\vskip -2 in
\includegraphics[scale=0.3]{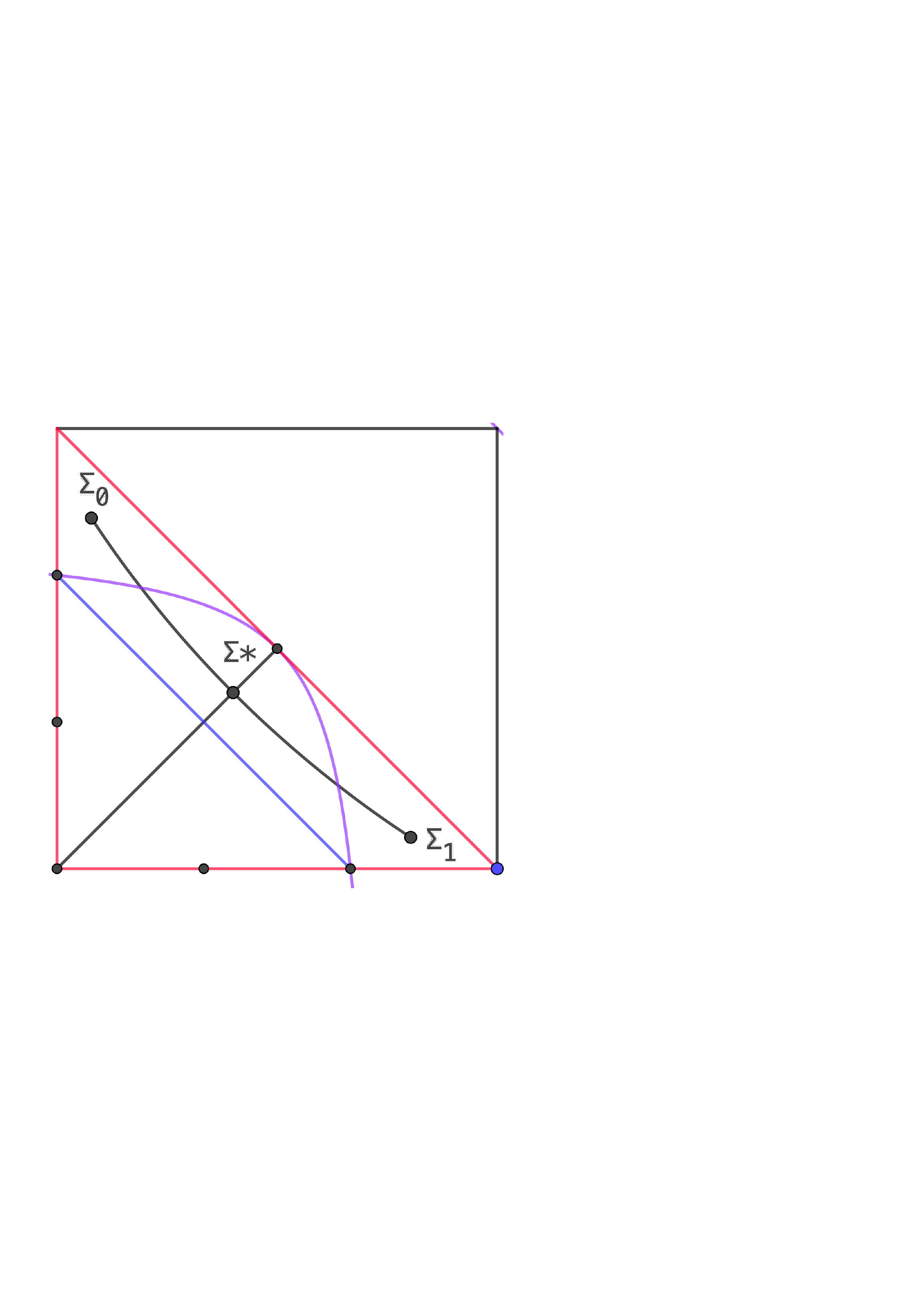} 
\vskip -1 in 
\caption{$ t \mapsto \left(\frac{ |\gaussp_L\left(\Sigma_t\right)|}{2\pi},\frac{ |\gaussp_R\left(\Sigma_t\right)|}{2\pi} \right)$  }
\end{center}
\end{figure}
\noindent Let us conclude the proof of Theorem \ref{thm4}.\\
Let $\Sigma$  be a minimal surface such that  the Gauss map area $|g(\Sigma)| <2\pi$. \\
Then  the projected area  verifies $|\gaussp_L(\Sigma)| +|\gaussp_R(\Sigma)| < 2\pi $ by  Corollary \ref{aires}.\\
Suppose $|\gaussp_L(\Sigma)|  = |\gaussp_R(\Sigma)| $ then by the  previous  inequality, $|\gaussp_L(\Sigma_{t_0} )| $ and $  |\gaussp_R(\Sigma_{t_0})| $ are each less than $\pi$ ie 
 the proportionate area of the  left  and right Gauss map  are less than $\frac{1}{2}$
and  by  Proposition  \ref{hypequi},  $\Sigma$ is stable.  \\
Suppose that  $|\gaussp_L(\Sigma)|  < |\gaussp_R(\Sigma)| $,  then  by equation \eqref{perm} : \\
$|\gaussp_L(\Sigma_1)|  =   |\gaussp_R(\Sigma)|   >  |\gaussp_L (\Sigma)| =   |\gaussp_R(\Sigma_1)| $.\\
By  continuity    there is  necessarily a  $t_0 \in [0,1[$ such that the left and right spherical area are equal:
$|\gaussp_L(\Sigma_{t_0} )|  = |\gaussp_R(\Sigma_{t_0})| $.  By Lemma \ref{propAsso}-3 and  Corollary \ref{aires}  each area is less than $\pi$  whence
$\Sigma_{t_0}$ is stable.  By Lemma \ref{propAsso}-4  $\Sigma$ is stable.
\vfill   \qed 
 \vskip -4 in

\noindent{\fontsize{8}{8} \selectfont  DEPARTAMENTO DE MATEMATICA, UNIVERSITADE FEDERAL DE SANTA MARIA 97105-900 SANTA MARIA RS/BRAZIL}\\
{\it Email address: } {\tt  ari.aiolfi@ufsm.br}\\
\noindent{\fontsize{8}{8} \selectfont  INSTITUT DENIS POISSON , CNRS UMR 7013, UNIVERSITE DE TOURS, UNIVERSITE D'ORLEANS, PARC DE GRANDMONT 37200 TOURS}\\
{\it Email address: } {\tt  marc.soret@idpoisson.fr}\\
\noindent{\fontsize{8}{8} \selectfont UNIVERSITE PARIS EST CRETEIL, CNRS LAMA, F-94010 CRETEIL, FRANCE  }\\
{\it Email address: } {\tt  villemarina@yahoo.fr}\\


\begin{thebibliography}{12345}
  \bibitem[A]{A}  Y. Aminov  {\it  Geometry of submanifolds},  CRC Press 2014
 
	\bibitem[BDC]{BDC} J.L. Barbosa, M.  Do Carmo, {\it Stable minimal surfaces}   Bull. of the AMS  80 3 (1974)  581-583.
	
 \bibitem[BDC2]{BDC2} J.L. Barbosa, M. Do Carmo, {\it On  the size of minimal surfaces in $\mathbb{R}^3$ } American Journal of Mathematics,  98 2 (1976)  515-528.
  	\bibitem[BDC3]{BDC3} J.L.  Barbosa,  M. Do Carmo, {\it Stability of  minimal surfaces and eigenvalues of the Laplacian}   Math. Z.   173 (1980), 13-28
\bibitem[E]{E} L. P. Eisenhart,	{\it Minimal Surfaces in Euclidean Four-Space} American Journal of Mathematics
Vol. 34, No. 3 (1912),  215-236
\bibitem[ES]{ES}J. Eells, S. Salamon, {\it Twistorial constructions of
	harmonic maps of surfaces into 4-manifolds}, Ann. Scuol. Norm. di Pisa,
12 (1985), 589-640	
\bibitem[HO]{HO}  D. Hoffman, R. Osserman, {\it The area of the generalized gaussian image and the stability of minimal surfaces in $\mathbb{S}^n$ and $\mathbb{R}^n$}   
Math. Annalen .  360 (1982), 437-452
 \bibitem[L]{L}  H. Blaine Lawson Jr {\it  Lectures on minimal submanifolds}, Publish or perish  1980.
 \bibitem[MW]{MW}   M. Micallef ,  B. White, {\it The structure of branch points in minimal surfaces  } , 
  Annals of Mathematics , Vol. 141, No. 1 (1995), 35-85  
  \bibitem[S]{S}  S. Sato, {\it Barta's inequality and the first eigenvalue of a cap domain of a 2-sphere }   , Math. Z 181  (1982),   313-318
\end{thebibliography}
		\end{document}